\title[Discrete-time Hawkes process with inhibition]{(Almost) complete characterization of stability of a discrete-time Hawkes process with inhibition and memory of length two}
\date{}
\author[M. Costa]{Manon Costa}
\address{Manon Costa\\ Institut de Math\'ematiques de Toulouse, CNRS UMR 5219 \\
	Universit\'e Toulouse III Paul Sabatier
	\\ 118 route
	de Narbonne, F-31062 Toulouse cedex 09.} \email{manon.costa@math.univ-toulouse.fr}
\author[P. Maillard]{Pascal Maillard}
\address{Pascal Maillard\\Institut Universitaire de France and Institut de Math\'ematiques de Toulouse, CNRS UMR 5219 \\
	Universit\'e Toulouse III Paul Sabatier
	\\ 118 route
	de Narbonne, F-31062 Toulouse cedex 09.} \email{pascal.maillard@math.univ-toulouse.fr}
\author[A. Muraro]{Anthony Muraro}
\address{Anthony Muraro\\ Institut de Math\'ematiques de Toulouse, CNRS UMR 5219 \\
	Universit\'e Toulouse III Paul Sabatier
	\\ 118 route
	de Narbonne, F-31062 Toulouse cedex 09.} \email{anthony.muraro@math.univ-toulouse.fr}
\theoremstyle{definition}
\theoremstyle{plain}
\newtheorem{lemma}{Lemma}
\newtheorem{prop}{Proposition}
\newtheorem{theorem}{Theorem}
\newcommand{\R}{\mathbb{R}}
\newcommand{\C}{\mathbb{C}}
\newcommand{\N}{\mathbb{N}}
\renewcommand{\P}{\mathbb{P}}
\newcommand{\E}{\mathbb{E}}
\newcommand*{\couple}[2]{( #1, #2 )}
\newcommand*{\proba}[1]{\mathbb{P}\left( #1 \right)}
\tikzset{
    xmin/.store in=\xmin, xmin/.default=-3, xmin=-3,
    xmax/.store in=\xmax, xmax/.default=3, xmax=3,
    ymin/.store in=\ymin, ymin/.default=-3, ymin=-3,
    ymax/.store in=\ymax, ymax/.default=3, ymax=3,
}
\begin{document}
\maketitle

\begin{center}
	\textsc{Universit\'e de Toulouse}
	\smallskip
\end{center}
\begin{abstract}
We consider a Poisson autoregressive process whose parameters depend on the past of the trajectory. We allow these parameters to take negative values, modelling inhibition. More precisely, the model is the stochastic process $(X_n)_{n\ge0}$ with parameters $a_1,\ldots,a_p \in \R$, $p\in\N$ and $\lambda \ge 0$, such that for all $n\ge p$, conditioned on $X_0,\ldots,X_{n-1}$, $X_n$ is Poisson distributed with parameter 
\[
\left(a_1 X_{n-1} + \cdots + a_p X_{n-p} + \lambda \right)_+
\]
This process can be regarded as a discrete time Hawkes process with inhibition and a memory of length $p$. In this paper we initiate the study of necessary and sufficient conditions of stability for these processes, which seems to be a hard problem in general.
We consider specifically the case $p = 2$, for which we are able to classify the asymptotic behavior of the process for the whole range of parameters, except for boundary cases. In particular, we show that the process remains stochastically bounded whenever the solution to the linear recurrence equation $x_n = a_1x_{n-1} + a_2x_{n-2} + \lambda$ remains bounded, but the converse is not true. Furthermore, the criterion for stochastic boundedness is not symmetric in $a_1$ and $a_2$, in contrast to the case of non-negative parameters, illustrating the complex effects of inhibition. 
\end{abstract}
\bigskip

\noindent\textit{Keywords:}  Hawkes process, Inhibition, Markov chain,  Autoregressive process, Ergodicity, Lyapunov functions.\medskip

\noindent\textit{MSC2020 Classification:} 60J20, 62M10, 39A30.\medskip

\section{Introduction}
The motivation of this paper is to pave the way for obtaining sufficient and necessary conditions for the stability of non-linear Hawkes processes with inhibition.
Hawkes processes are a class of point processes used to model events that have mutual influence over time. They were initially introduced by Hawkes (1971) (\cite{hawkes_spectra_1971}, \cite{hawkes_cluster_1974}) and are now used in a variety of fields such as finance, biology, and neuroscience.

More precisely, a Hawkes process $(N_t^h)_{t\in \R} = \left(N^h([0,t]\right))_{t\in \R}$ is defined by its initial condition on $(-\infty,0]$ and its stochastic conditional intensity denoted by $\Lambda$, characterized by : 
\begin{equation}
\label{Lambda}
\Lambda(t) = \phi \left( \lambda + \int_{-\infty}^t h(t-s)N^h(\text{d}s)\right),
\end{equation}
where $\lambda >0$, $h : \R_+ \to \R$ and $\phi : \R \to \R_+$ are measurable, deterministic functions (see \cite{daley_introduction_2006} for further details).
 The function $h$ is called the \textit{reproduction function}, and contains the information of the behaviour of the process throughout time. 
In the case where $\phi$ is non-decreasing, the sign of the function $h$ encodes for the type of time dependence: when $h$ is non-negative, the process is said to be \textit{self-exciting}; when $h$ is signed : the negative values of $h$ can then be seen as self-inhibition (see \cite{cattiaux_limit_2021} and \cite{costa_renewal_2020}). The case where $h\ge0$ and $\phi=id$ is called the linear case.
Considering signed functions $h$ requires to add non-linearity by the mean of a function $\phi$ which ensures that the intensity remains positive.
In this paper, we will focus on the particular case where $\phi = (\cdot)_+$ is the ReLU function defined on $\R$ by $(x)_+ = \max(0,x)$.
\medskip

Several authors have established sufficient conditions on $h$ ensuring the existence of a stable version of this process. For signed $h$, Brémaud and Massoulié \cite{bremaud_stability_1996} proved that a stable version of the process exists if $\|h\|_1<1$. In \cite{costa_renewal_2020}, Costa \textit{et al.} proved that it is sufficient to have $\|h^+\|_1<1$, where $h^+(x)=\max(h(x),0)$ using a coupling argument.  Unfortunately, this sufficient criterion does not take into account the effect of inhibition, captured by the negative part of $h$.
Going further is difficult because non-linearity breaks the direct link between the function $h$ and the probabilistic structure of the Hawkes process. Recent results have been obtained by \cite{raad_stability_2020} for a two dimensional non-linear Hawkes process with weighted exponential kernel, modeling the case of two populations of interacting neurons including both inhibition and excitation. The author provide a criterion on the weight function matrix for stability exploiting the Markovian structure of the Hawkes process in that case. It is noteworthy that the stability condition  \cite[Assumption 1.2]{raad_stability_2020} is similar to the case $\mathcal{R}_2$ of this paper, by reinterpreting the meaning of our parameters to correspond to those of the model described in \cite{raad_stability_2020}. Our work focuses on a simpler process due to its discrete-time nature, yet the significance of our study lies in providing an almost complete classification of its asymptotic behaviour without requiring assumptions on the parameter values of the model. \medskip

In order to get an intuition on the results that one might obtain on Hawkes processes, we choose to consider a simplified, discrete analog of those processes. Namely we will study an autoregressive process $(\Tilde X_n)_{n\ge 1}$ with initial condition $(\Tilde X_0, \dots, \Tilde X_{-p+1})$ where $p \in \{ 1, 2, \dots \}$, and such that:
\begin{equation}
\label{defDiscreteHawkesP}
\forall n \geq 1, \quad \Tilde X_n \sim \mathcal{P}\left( \phi\left( a_1\Tilde X_{n-1} + \cdots + a_p\Tilde X_{n-p} + \lambda \right) \right),
\end{equation}
where $\mathcal{P(\rho)}$ denotes the Poisson distribution with parameter $\rho$, and $a_1,\ldots,a_p$ are real numbers.

In the linear case ($a_1,\ldots,a_p$ non-negative, and $\phi(x)=x$) these integer-valued processes are called INGARCH processes, and have already been studied in (\cite{ferland_integer-valued_2006}, \cite{fokianos_interventions_2010}), where a necessary condition for existence and stability of this class of processes has been derived and can be written $\sum_{i=1}^p {a_i}<1$. Furthermore, the link between Hawkes processes and autoregressive Poisson processes has already been made for the linear case : M. Kirchner proved that the discretized autoregressive process (with $p = +\infty$) converges weakly to the associated Hawkes process (see \cite{kirchner2016hawkes} for details). Although this convergence has only been demonstrated in the linear case, i.e., with positive $a_i$, it seemed valuable to us to understand the modifications induced by the presence of inhibition on the asymptotic behavior of these processes. An analogous discrete process has been proposed by Seol \cite{seol2015limit} using an autoregressive structure based on Bernouilli random variables. 

In order to explore the effect of inhibition, we consider signed values for the parameters $a_1,\ldots,a_p$. In this article, we focus on the specific case of $p=2$, so that our model of interest can be written as : 
\begin{equation}
\label{defDiscreteHawkes2}
\forall n \geq 2, \quad \Tilde X_n \sim \mathcal{P}\left( \left( a\Tilde X_{n-1} + b\Tilde X_{n-2} + \lambda \right)_+\right),
\end{equation}
with $a,b \in \R$ and $\Tilde X_0, \Tilde X_1 \in \N$. (In this paper we will use the convention $0 \in \mathbb{N}$).

In this paper, the most important result is the classification of the process defined in \eqref{defDiscreteHawkes2}.
Note that the complete characterization of the behavior of this simple process is difficult due to the variety of behaviors observed. We will prove that the introduction of non-linearity through the ReLU function makes the process more stable relatively to its linear counterpart, in the sense that the parameter space 
$(a,b) \in \R^2$ for which the linear process $y_{n+1}=ay_{n}+by_{n-1}+\lambda$ admits a stationary version is strictly a subset of the parameter space for which the non-linear process admits a stationary version ( see Appendix A.). Our results also enlighten the complex role of inhibition and in particular the asymmetric role of $a$ and $b$ associated with the range at which inhibition occurs. Our work suggests the existence of complex algebraic and geometric structures that are likely to play an important role in the more general case of a memory of order $p$.
In order to obtain our results, we used a wide range of probabilistic tools, corresponding to the variety of the behaviours of the trajectories of the process, depending on the parameters of the model.

\section{Notations, definitions and results}

\subsection{Definition and main result}

Let $a,b \in \mathbb{R}$ and $\lambda > 0$. We consider a discrete time process  $(\tilde{X}_n)_{n \geq 1}$ with initial condition $(\tilde{X}_0, \tilde{X}_{-1})$ such that the following holds for all $n\ge 1$:
\begin{equation*}
\text{conditioned on $\tilde{X}_{-1},\ldots,\tilde{X}_{n-1}$: }\tilde{X}_n\sim \mathcal{P}\left( \left(a \tilde{X}_{n-1} + b \tilde{X}_{n-2} + \lambda \right)_+ \right), 
\end{equation*}
where $(\cdot)_+$ is the ReLU function defined on $\mathbb{R}$ by $(x)_+ := \max(0,x)$.

As we said previously, some papers have already been dealing with the linear version of this process : if $a$ and $b$ are non-negative, the parameter of the Poisson random variable in \eqref{defDiscreteHawkes2} is also non-negative, and the ReLU function vanishes. In this case, Proposition 1 in \cite{ferland_integer-valued_2006} states that the process is a second-order stationary process if $a+b<1$. This weak stationarity ensures that the mean, variance and autocovariance are constant with time. \medskip


Let us define the function 
\[
b_c(a) = \begin{cases}
1 & a \le 0\\
1-a & a\in(0,2)\\
-\frac{a^2}{4} & a\ge 2
\end{cases}
\]
and define the following sets (see Figure~\ref{fig:RT}):
\begin{align}
\label{eq:def_R}
\mathcal R &= \left\{(a,b)\in\R^2: b<b_c(a)\right\}\\
\label{eq:def_T}
\mathcal T &= \left\{(a,b)\in\R^2: b>b_c(a)\right\}.
\end{align}

Our main result is the following
\begin{theorem}\ 
\label{thm:main}
\begin{itemize}
    \item If $(a,b) \in \mathcal{R}$, then the sequence $(\Tilde X_n)_{n\ge0}$ converges in law as $n\to\infty$. 
    \item if $(a,b) \in \mathcal{T}$, then the sequence $(\Tilde X_n)_{n\ge0}$ satisfies that
almost surely 
$$\Tilde X_n+\Tilde X_{n+1}\underset{n\to\infty}{\longrightarrow}+\infty\,.$$
\end{itemize}
\end{theorem}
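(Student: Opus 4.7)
The pair $Y_n := (\Tilde X_{n-1}, \Tilde X_n)$ is a time-homogeneous Markov chain on $\N^2$. Since $\lambda>0$, the singleton $\{(0,0)\}$ is an accessible atom (a Poisson variable vanishes with positive probability regardless of its intensity, so any state reaches $(0,0)$ in two steps), and the chain is aperiodic (from $(0,0)$ the next state is $(0,\mathcal{P}(\lambda))$, which equals $(0,0)$ with probability $e^{-\lambda}>0$). The overall strategy is to apply the Foster--Lyapunov / Meyn--Tweedie machinery in each of the two regions.

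\emph{Convergence on $\mathcal R$.} I would construct a Lyapunov function $V : \N^2 \to [1,\infty)$ and a finite set $K\subset\N^2$ with
\[
\E[V(Y_{n+1}) \mid Y_n = (y,x)] \le V(y,x) - \varepsilon + C\,\mathbf 1_K(y,x),
\]
which, combined with irreducibility and aperiodicity, yields positive Harris recurrence and hence convergence in distribution. The region $\mathcal R$ decomposes into several qualitatively distinct sub-regimes, each demanding its own $V$. In the classical INGARCH regime ($a,b \ge 0$, $a+b<1$), a linear $V(y,x) = 1 + y + \gamma x$ with $\gamma$ tuned so that $\gamma(1-a)>1>\gamma b$ gives the drift directly, since the ReLU is inactive. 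For $a\le 0,\ b<1$, the same form of $V$ still works, this time \emph{using} the ReLU cutoff to cancel the outgoing $|a|x$ contribution whenever $ax+by+\lambda<0$. The delicate part of $\mathcal R$ is where the linear map $M=\bigl(\begin{smallmatrix} 0 & 1 \\ b & a\end{smallmatrix}\bigr)$ already has spectral radius $\ge 1$, most notably when $a\ge 2$ and $b<-a^2/4$: there no linear Lyapunov function is contractive in expectation and stabilization comes entirely from the truncation. I would handle this via a two-step analysis: from a large $x$, the intensity at step $n+2$ is $(aX_{n+1}+bx+\lambda)_+$, and because $b<0$ is strongly contracting, the ``reset region'' where this intensity vanishes is substantial. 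The threshold $b=-a^2/4$ is precisely where the expected two-step map of the ReLU-modified dynamics stops being strictly contracting, so a Lyapunov function of the form $V(y,x) = \alpha y + \beta x$ with $(\alpha,\beta)$ aligned to the contracting direction of the two-step map (possibly composed with a concave envelope to absorb higher moments of the Poisson noise) should yield the required drift.

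\emph{Transience on $\mathcal T$.} I would exhibit a nonnegative functional $W(y,x)\to\infty$ as $y+x\to\infty$, a finite set $K$, and $\delta>0$ such that $\E[W(Y_{n+1}) \mid Y_n]\ge W(Y_n) + \delta$ off $K$, then upgrade this positive drift to almost-sure divergence of $\Tilde X_n + \Tilde X_{n+1}$ via a concentration / Borel--Cantelli argument exploiting the Poisson variance bound. The functional $W(y,x)=y+x$ works in the linear transience sub-region ($a,b\ge 0$, $a+b>1$); in the inhibition regime ($a\le 0,\ b>1$) a two-step version of $W$ is needed because negative $a$ forces an alternation between small and large values, and one uses $b>1$ on the even-indexed (or odd-indexed) subsequence.

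\emph{Main obstacle.} I expect the hardest step to be the recurrence argument in the region $a\ge 2$, $b<-a^2/4$. There the naive one-step drift is positive, and only a carefully tailored multi-step Lyapunov function, exploiting simultaneously the ReLU cutoff and the Poisson fluctuations, can recover negative drift. Proving sharpness all the way up to the boundary $b=-a^2/4$ (rather than merely in some weaker strip) likely requires the Lyapunov functional to be finely aligned with the contracting direction of the ReLU-modified two-step map, and is where the bulk of the technical work should lie.
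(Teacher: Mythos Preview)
Your overall architecture---study the bivariate chain, note $(0,0)$ is an accessible aperiodic atom, and run Foster--Lyapunov drift conditions region by region---is exactly the paper's. Your treatment of the ``easy'' sub-regions (linear $V$ when $a+b<1$, $a,b<1$; two-step analysis for $a<0,\ b>1$ in the transient regime) also lines up with what the paper does.

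The substantive divergence, and the place where your plan is genuinely incomplete, is the region you correctly flag as hardest: $a>0$ with $a^2+4b<0$ (in particular $a\ge 2$, $b<-a^2/4$). You propose a linear functional in $(y,x)$ aligned with the contracting direction of the two-step map, perhaps wrapped in a concave envelope. The paper does something quite different and rather unexpected: it uses the \emph{one-step} Lyapunov function
\[
V(i,j)=\frac{i}{j+1}+1,
\]
which is an angular coordinate rather than anything linear or quadratic. The geometric picture is that trajectories rotate counterclockwise until they hit the $j$-axis; $V$ decreases along this rotation. Two further points you would have to discover along your route: (i) the exception set in the drift condition is \emph{not} finite---it is the infinite set $A=\{(i,j):s_{ij}=0\}$ together with a finite correction, and one must argue separately that $A$ is petite (this uses $b<0$); (ii) there is a residual strip $1\le a<2$, $-a^2/4\le b<1-a$ not covered by either the linear or the ratio function, and for it the paper introduces a third, quadratic Lyapunov function $V(i,j)=1+\bigl(i^2-aij+\tfrac{b^2+1}{2}j^2\bigr)\mathbf 1_{A^c}(i,j)$. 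Your sketch does not anticipate this third region at all.

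For transience your positive-drift plus Borel--Cantelli outline is in the right spirit. The paper implements it not via a supermartingale inequality for a single $W$, but by constructing explicit nested sets $S_n$ (growing like $r^n$ along the dominant eigendirection $(\theta,1)$ when $b<0$) and showing $\sum_n \sup_{x\in S_n}\P_x(X_1\notin S_{n+1})<\infty$ via Bienaym\'e--Chebyshev. Your $W=y+x$ idea works when $b\ge0$, but for $a\ge 2$, $b<0$ one really needs to track the ratio $\tilde X_{n+1}/\tilde X_n$ staying near $\theta$, which your sketch does not mention.
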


This result derives from the study of the natural Markov chain associated with $\Tilde X_n$ that is defined by:
\begin{equation}
\label{MarkovChain}
X_n := (\Tilde X_{n}, \Tilde X_{n-1}), \quad n\ge0\,.
\end{equation}
Before giving more details about the behaviour of $(X_n)_{n\ge0}$, let us comment on Theorem \ref{thm:main}. In particular, we stress that the condition for convergence in law is not symmetrical in $a$ and $b$. More precisely, for any $a\in\R$, the sequence $(\Tilde X_n)$ can be tight provided that $b$ is chosen small enough, but the converse is not true as soon as $b>1$. This induces that inhibition has a stronger regulating effect when occurs after an excitation, rather than before. 

The question of the critical behavior of the process on the boundary $\{b=b_c(a)\}$ remains open and presents us a difficult question for further work.

\subsection{The associated Markov chain}
\label{sec:associated_markov_chain}

As mentioned above, the main part of the article is devoted to the study of a Markov chain $(X_n)$ which encodes the time dependency of $(\Tilde X_n)$. We will rely on the recent treatment by Douc, Moulines, Priouret and Soulier \cite{dmps_markov_chains} for results about Markov chains. In particular, we use their notion of irreducibility, which is weaker than the usual notion of irreducibility typically found in textbooks on Markov chains (on discrete state space). Thus, a Markov chain is called \emph{irreducible} if there exists an \emph{accessible state}, i.e., a state that can be reached with positive probability from any other state. Following Douc, Moulines, Priouret and Soulier \cite{dmps_markov_chains}, we refer to the usual notion of irreducibility (i.e., every state is accessible) as \emph{strong irreducibility}.


The transition matrix of the Markov chain $(X_n)_{n\ge0}$ defined in \eqref{MarkovChain} is thus given for $(i,j,k,l)\in\N^4$ by:

\begin{equation}
\label{transitionP}
P\left( \couple{i}{j}, \couple{k}{\ell} \right) = \delta_{i \ell} \dfrac{e^{-s_{ij}}s_{ij}^k}{k !},
\end{equation}
where :
$$s_{ij} := (ai+bj+\lambda)_+ \text{ and } \delta_{ij} := \begin{cases}
1 & \text{if $i=j$} \\ 
0 & \text{else}
\end{cases}.$$

In other words, starting from a state $(i,j)$, the next step of the Markov chain will be $(k,i)$ where $k \in \N$ is the realization of a Poisson random variable with parameter $s_{ij}$. In particular, if $s_{ij} = 0$, then the next step of the Markov chain is $(0,i)$.

Since the probability that a Poisson random variable is zero is strictly positive, it is possible to reach the state $(0,0)$ with positive probability from any state in two steps. In particular, the state $(0,0)$ is accessible and the Markov chain is irreducible. Furthermore, the Markov chain is aperiodic \cite[Section~7.4]{dmps_markov_chains}, since $P((0,0),(0,0)) = e^{-\lambda} >  0$. Note that strong irreducibility may not hold (see Proposition~\ref{prop:irr} in the appendix).


Recall the definition of the sets $\mathcal{R}$ and $\mathcal{T}$ in \eqref{eq:def_R} and \eqref{eq:def_T}. 

\begin{theorem}\ 
\label{thm:classification}
\begin{itemize}
\item Let $(a,b)\in \mathcal R$. Then the Markov chain $(X_n)_{n\ge0}$ is \emph{geometrically ergodic}, i.e., it admits an invariant probability measure $\pi$ and there exists $\beta > 1$, such that for every initial state,
\[
\beta^n d_{TV}(\operatorname{Law}(X_n),\pi) \to 0,\quad n\to\infty,
\]
where $d_{TV}$ denotes total variation distance.
\item Let $(a,b)\in\mathcal T$. Then the Markov chain is transient, i.e., every state is visited a finite number of times almost surely, for every initial state. 
\end{itemize}
\end{theorem}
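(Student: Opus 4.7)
My plan is to reduce both parts of the theorem to standard Foster--Lyapunov criteria for $\psi$-irreducible aperiodic Markov chains (cf.\ Chapters~9 and~15 of \cite{dmps_markov_chains}). Geometric ergodicity reduces to a $k$-step drift inequality $P^k V \le \gamma V + K\mathbf{1}_C$ with $\gamma \in (0,1)$, $K < \infty$, $C$ finite, and $V:\N^2\to[1,\infty)$; transience reduces to the existence of a bounded non-negative $V$, a finite set $C$, and a state $x_0 \notin C$ with $PV \ge V$ on $C^c$ and $V(x_0) > \sup_C V$. The required irreducibility, aperiodicity, and ``finite $\Rightarrow$ small'' properties are already in hand from the discussion preceding the statement.

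\emph{Ergodicity in $\mathcal R$.} I would split $\mathcal R$ into sub-regions. When $a<1$, the linear function $V(x,y)=1+\alpha x+\beta y$ with $(\alpha,\beta)$ proportional to the positive left-eigenvector of $\bigl(\begin{smallmatrix}a & b\\1 & 0\end{smallmatrix}\bigr)$ for its dominant root $\gamma^*<1$ yields a one-step drift $PV \le \gamma V+K\mathbf{1}_C$; the elementary bounds $(ax+by+\lambda)_+\le ax+\lambda$ (when $b\le 0$) and $(ax+by+\lambda)_+ \le by+\lambda$ (when $a\le 0$) handle the ReLU. The technically demanding part is the remaining region $a\ge 1$, $b<0$ in $\mathcal R$, and in particular the strong-inhibition wedge $a\ge 2,\, b<-a^2/4$, where the underlying linear recurrence itself is unstable (complex-conjugate roots of modulus $\sqrt{-b}>1$). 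Stability in this region is a genuinely nonlinear effect: the deterministic recurrence grows oscillatingly for $k\sim\pi/(2\arg\gamma^*)$ steps, crosses zero, is truncated by $(\cdot)_+$, and collapses to $\mathcal O(\lambda)$. I would use a Lyapunov function of the form $V(x,y)=1+(x+y)^\delta$ for some $\delta\in(0,1)$ and verify a $k$-step drift using Poisson concentration (the standard deviation of $\mathcal P(\mu)$ is $\sqrt\mu$) to confine the process to an $\mathcal O(\sqrt{\mu_n})$-tube around the deterministic trajectory during the growth phase, then exploit the collapse after the zero crossing.

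\emph{Transience in $\mathcal T$.} A direct algebraic check shows that $(a,b)\in\mathcal T$ always implies that $\gamma^2-a\gamma-b$ has a real root $\gamma^*>1$. Setting $c=\gamma^*-a=b/\gamma^*$ and $S_n:=\tilde X_n+c\tilde X_{n-1}$, one obtains $\E[S_{n+1}\mid X_n] = \gamma^* S_n + \lambda$ on the event $\{a\tilde X_n+b\tilde X_{n-1}+\lambda\ge 0\}$. When $b\ge 0$, $c\ge 0$ and the intensity is always $\ge\lambda>0$, so the rescaled process $M_n := S_n/(\gamma^*)^n$ has summable conditional variance increments ($\mathrm{Var}(M_{n+1}-M_n\mid X_n)=\mathcal O((\gamma^*)^{-n}M_n)$) and converges a.s.\ and in $L^2$ to a limit $M_\infty$ with $\E M_\infty = S_0+\lambda/(\gamma^*-1)>0$; a branching-with-immigration Borel--Cantelli argument yields $M_\infty > 0$ a.s., hence $S_n\to\infty$ a.s., hence transience. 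When $b<0$, $c<0$ and the intensity may vanish, but starting from $(x,0)$ with $x$ large one has $\E S_1 = \gamma^* x+\lambda \gg \sqrt{\mathrm{Var}(S_1)} = \mathcal O(\sqrt x)$, so with positive probability the process remains in the positive-intensity regime indefinitely, $M_n$ converges to a strictly positive limit, and $S_n\to\infty$; by irreducibility this gives transience of every state, and since the state space is countable, every state is visited only finitely often a.s., yielding $\tilde X_n+\tilde X_{n+1}\to\infty$ a.s.

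The main technical obstacle lies in the strong-inhibition regime, where on either side of the threshold $b=-a^2/4$ (for $a\ge 2$) no one-step analysis can succeed and the argument must rely on multi-step Poisson concentration carefully balanced against the $(\cdot)_+$ truncation. I would aim to handle both sides of the threshold uniformly by working with the iterated companion polynomial and the asymptotic geometry of its eigenstructure rather than with the parameters $a,b$ directly.
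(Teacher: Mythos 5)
There is a genuine gap, and it sits exactly at the point where this model departs from its linear analogue. Your transience argument rests on the claim that $(a,b)\in\mathcal T$ always forces a real root $\gamma^*>1$ of $x^2-ax-b$. This is false on the whole strip $\{a<0,\ 1<b< 1-a\}\subset\mathcal T$ (take $a=-10$, $b=2$: the roots are $\approx 0.196$ and $\approx -10.2$, so the largest \emph{real} root is below $1$ and the dominant eigenvalue is negative). Moreover your claim that for $b\ge 0$ ``the intensity is always $\ge\lambda$'' silently assumes $a\ge 0$; for $a<0$ the intensity $a\tilde X_n+b\tilde X_{n-1}+\lambda$ is truncated to $0$ whenever $\tilde X_n$ is large and $\tilde X_{n-1}$ small, which in this regime happens constantly. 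So the supermartingale $S_n=\tilde X_n+c\tilde X_{n-1}$ gives you nothing on the region $a<0$, $b>1$, and that region is not a corner case: it is precisely where the transience criterion becomes asymmetric in $a$ and $b$, which is the paper's main phenomenon. The paper's case T1 handles it by a completely different mechanism: since $a<0$, the chain is driven onto the axes ($(i,0)\to(0,i)\to(\mathcal P(bi+\lambda),0)$), and the \emph{two-step} map along the $i$-axis has multiplier $b>1$; growth along the sequence of sets $S_n=\{(i,0):i\ge r^n\}$, $1<r<b$, is then secured by Chebyshev and a Borel--Cantelli-type product bound (Lemma~\ref{lem:strategy_transience}). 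Your eigenvalue-based argument cannot be repaired to cover this case, because the growth direction is not an eigenvector of the companion matrix at all. For case T2 your sketch is essentially the paper's: for $b<0$ you must control the ratio $\tilde X_n/\tilde X_{n-1}$ (not just the linear combination $S_n$) to keep the intensity positive, which is what the paper's constraint $|i/j-\theta|\le\varepsilon$ in the definition of $S_n$ does.

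On the recurrence side the plan is closer to workable but still incomplete. For $a<1$ your linear Lyapunov function is the paper's $\mathcal R_1$ argument, except that the ``positive left-eigenvector'' prescription fails both when the eigenvalues are complex (e.g.\ $a=0.5$, $b=-10$, still in $\mathcal R$) and on the truncation set $\{ai+bj+\lambda\le 0\}$, where the drift must be checked separately (the paper simply picks $\beta$ in an explicit interval $\bigl(b/(1-\varepsilon),\ \min\{1-a-\varepsilon,1-\varepsilon\}\bigr)$ rather than the eigenvector ratio $b/\gamma^*$). For $a\ge 1$ you propose a $k$-step drift for $V=1+(i+j)^\delta$ via Poisson concentration along the deterministic trajectory; this is plausible but unexecuted, the number of steps to the zero-crossing depends on the phase of the starting point, and the ``growth--truncation--collapse'' picture does not describe the sub-region $1\le a<2$, $-a^2/4\le b<1-a$, where both roots are real, positive and below $1$ and no truncation occurs. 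The paper avoids all multi-step analysis here with two \emph{one-step} Lyapunov functions: the ``angle'' function $V(i,j)=i/(j+1)+1$ on $\{a>0,\ a^2+4b<0\}$ (exploiting the counterclockwise rotation of trajectories until they hit the $j$-axis and are absorbed by the truncation set $A$, which is shown to be petite directly), and a quadratic form $1+(i^2-aij+\tfrac{b^2+1}{2}j^2)\mathds{1}_{A^c}$ on $\{1\le a<2,\ -1<b<1-a\}$. If you want to pursue your route, the multi-step concentration argument would have to be written out uniformly in the starting phase, and the $a<0$, $b>1$ transient case needs an entirely new idea.
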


\begin{figure}[!ht]
\includegraphics[scale=0.75]{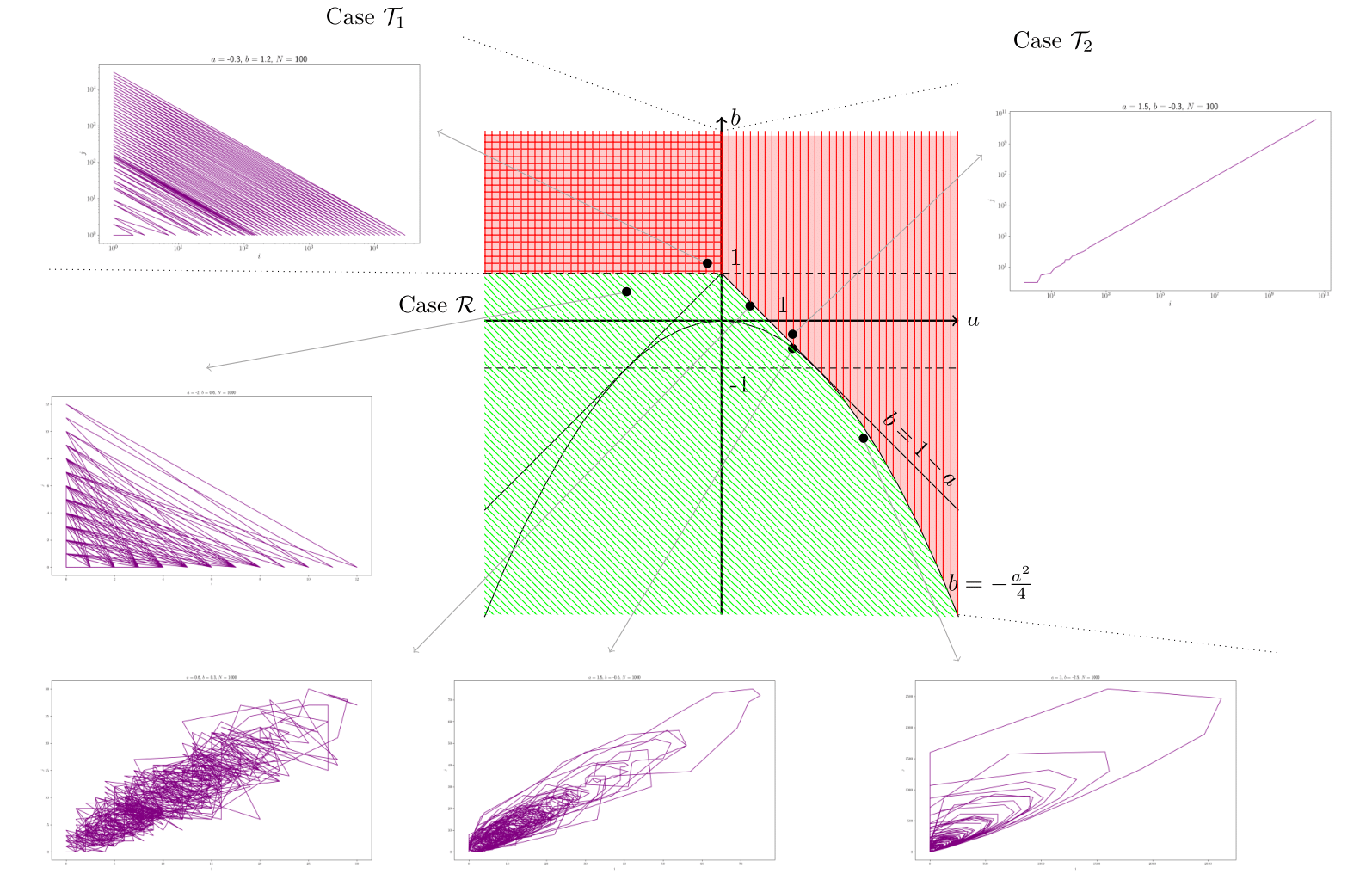}
\caption{\label{fig:RT}The partition of the parameter space described in Theorem \ref{thm:classification}. The green region corresponds to $\mathcal{R}$, while the red region corresponds to $\mathcal T$. The smaller figures are typical trajectories of the Markov chain $(X_n)_{n\ge0}$ for each region of the parameter space. In the all the simulations, we chose $\lambda=1$. The region delineated by a dashed triangular line corresponds to the region of parameter space for which the linear recurrence equation $y_{n} = ay_{n-1} + by_{n-2} + \lambda$ is bounded for all $n \in \N$, for any given $y_0, y_1 \in \R$. See Appendix A for more details.}
\end{figure}


Theorem \ref{thm:main} is a simple consequence of this result. Indeed, in the case of $(a,b)\in\mathcal{R}$, the convergence in law of $\tilde{X}_n$ simply derives from the convergence in law of $X_n$ since $\tilde{X}_n$ is the first coordinate of $X_n$. In the transient case,  $(a,b)\in\mathcal{T}$ the result in Theorem \ref{thm:main} simply derives from the fact that $||X_n||_1\to_{n\to\infty}\infty$ almost surely.
\medskip

The rest of the article is devoted to the proof of Theorem \ref{thm:classification}. We first focus on the recurrent case in Section \ref{sec:proof_thm2}, then on the transient case in Section \ref{sec:transience}. Throughout this paper, we will provide typical trajectories of the considered cases. For the sake of clarity, we have plotted the realizations of $(X_n)_{n=0,\dots,N}$ by connecting its successive realizations. Unless otherwise stated for coherence purpose, we always set $X_0 = (0,0)$ and $\lambda = 1$ for our plots.

\section{Proof of Theorem \ref{thm:classification}: recurrence}
\label{sec:proof_thm2}

In this section, we prove the recurrence part of Theorem \ref{thm:classification}. The proof goes by exhibiting three functions satisfying Foster-Lyapounov drift conditions for different ranges of the parameters $(a,b)$ covering the whole recurrent regime $\mathcal R$. 
\subsection{Foster-Lyapounov drift criteria}

 Drift criteria are powerful tools that have been introduced by Foster \cite{foster_stochastic_1953}, and deeply studied, and popularized, by Meyn and Tweedie \cite{meyn_markov_2009}, among others. These drift criteria allow to prove convergence to the invariant measure of Markov chains and yield explicit rates of convergence. We use here the treatment from Douc, Moulines, Priouret and Soulier \cite{dmps_markov_chains}, which is influenced by Meyn and Tweedie \cite{meyn_markov_2009}, but is more suitable for Markov chains which are irreducible but not strongly irreducible.
 
 A set of states $C\subset \N^2$ is called \emph{petite} \cite[Definition~9.4.1]{dmps_markov_chains}, if there exists a state $x_0\in \N^2$ and a probability distribution $(p_n)_{n\in\N}$ on $\N$ such that
 \[
 \inf_{x\in C}\sum_{n\in\N} p_n P^n(x,x_0) > 0,
 \]
 where we recall that $P^n(x,x_0)$ is the $n$-step transition probability from $x$ to $x_0$. Since the Markov chain $(X_n)_{n\ge0}$ is irreducible, any finite set is petite (take $x_0$ to be the accessible state) and any finite union of petite sets is petite \cite[Proposition~9.4.5]{dmps_markov_chains}.
 
Let $V : \N^2 \to [1,\infty)$ be a function, $\varepsilon\in(0,1]$, $K<\infty$ and $C\subset \N^2$ a set of states. We say that the \emph{drift condition} $D(V,\varepsilon,K,C)$ is satisfied if 
\begin{equation*}
\Delta V(x) := \mathbb{E}_x [V(X_1)-V(X_0)] \leq -\varepsilon V(x)+K\boldsymbol 1_C,
\end{equation*}
where $\mathbb{E}_x[~\cdot~] = \mathbb{E}[~\cdot ~|~ X_0 = x]$.
It is easy to see that this condition implies condition $D_g(V,\lambda,b,C)$ from \cite[Definition~14.1.5]{dmps_markov_chains}, with $\lambda = 1-\varepsilon$ and $b = K$.

\begin{prop}
\label{prop:Lyapounov}
Assume that the drift condition $D(V,\varepsilon,K,C)$ is verified for some $V$, $\varepsilon$, $K$ and $C$ as above and assume that $C$ is petite. Then there exists $\beta > 1$ and a probability measure $\pi$ on $\N^2$, such that, for every initial state $x\in \N^2$,
\[
\beta^n\times \sum_{y\in \N^2} V(y)|\P_x(X_n = y)-\pi(y)| \to 0,\quad n\to\infty.
\]
In particular, for every initial state $x\in \N^2$,
\[
\beta^n d_{TV}(\operatorname{Law}(X_n),\pi)\to 0,\quad n\to\infty
\]
and $\pi$ is an invariant probability measure for the Markov chain $(X_n)_{n\ge0}$.
\end{prop}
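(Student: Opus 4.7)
The plan is to deduce this proposition directly from the $V$-geometric ergodicity theorem for $\psi$-irreducible, aperiodic Markov chains developed in \cite[Chapter~15]{dmps_markov_chains}. All of its hypotheses have essentially been lined up already.

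First, I would note that the drift condition $D(V,\varepsilon,K,C)$ can be rewritten as $PV(x) \le (1-\varepsilon)V(x) + K\boldsymbol 1_C(x)$, which is exactly condition $D_g(V,1-\varepsilon,K,C)$ from \cite[Definition~14.1.5]{dmps_markov_chains}, as the discussion preceding the proposition already points out.

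Next, the remaining structural hypotheses are already in place: $(X_n)_{n\ge0}$ was shown in Section~\ref{sec:associated_markov_chain} to be irreducible with accessible state $(0,0)$ and aperiodic (since $P((0,0),(0,0)) = e^{-\lambda}>0$), and the set $C$ is petite by assumption. The $V$-geometric ergodicity theorem of \cite[Chapter~15]{dmps_markov_chains} then delivers an invariant probability measure $\pi$ and a constant $\beta>1$ such that, for every initial state $x$,
\[
\beta^n \sum_{y\in \N^2} V(y)\bigl|\P_x(X_n = y) - \pi(y)\bigr| \to 0,
\]
which is precisely the first displayed convergence in the proposition. The total variation convergence follows immediately from $V\ge 1$ via $d_{TV}(\mu,\nu)\le \sum_y V(y)|\mu(y)-\nu(y)|$.

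The main ``obstacle'' here is really not one: the argument is essentially a citation, modulo matching notation and verifying the correspondence of hypotheses between the two frameworks. The genuine work of this section lies downstream, in constructing explicit Lyapunov functions $V$ that verify $D(V,\varepsilon,K,C)$ on each subregion of $\mathcal R$; that construction, and not the present proposition, is where the real analysis of the parameter regimes takes place.
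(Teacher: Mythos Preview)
Your proposal is correct and matches the paper's own proof essentially line for line: both verify irreducibility and aperiodicity from Section~\ref{sec:associated_markov_chain}, note that $D(V,\varepsilon,K,C)$ is the condition $D_g(V,1-\varepsilon,K,C)$ of \cite[Definition~14.1.5]{dmps_markov_chains} with $C$ petite, and then invoke the $V$-geometric ergodicity result of \cite[Chapter~15]{dmps_markov_chains} (the paper pins it down to Theorem~15.1.3(ii),(a) combined with the remark before Corollary~14.1.6), deducing the total variation statement from $V\ge 1$.
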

\begin{proof}
As mentioned in Section~\ref{sec:associated_markov_chain}, the Markov chain is irreducible and aperiodic. The first statement then follows by combining parts (ii) and (a) of Theorem~15.1.3 in \cite{dmps_markov_chains} with the remark preceding Corollary~14.1.6 in \cite{dmps_markov_chains}. The second statement immediately follows, noting that $V\ge 1$.
\end{proof}

\medskip

We will consider separately the following ranges of the parameters:
\begin{align*}
 \mathcal R_1 &= \{(a,b)\in \R^2: a,b < 1\text{ and }a+b<1\}\,,\\
 \mathcal R_2 &= \{a>0\text{ and }a^2 + 4b < 0\}\,,\\ 
 \mathcal R_3 &=  \{1\le a<2\text{ and } {-}1 < b < 1-a\}\,.
\end{align*}

\begin{figure}[!h]
\includegraphics[scale=0.9]{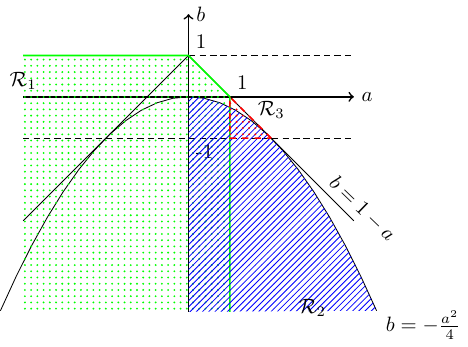}
\caption{Illustration of the three zones of parameters on which the proof of ergodicity will be carried.}
\label{fig:recurrent_regime}
\end{figure}
We then have $\mathcal R = \mathcal R_1\cup\mathcal R_2\cup \mathcal R_3,$ see Figure~\ref{fig:recurrent_regime}.

\subsection{Case $\mathcal R_1$}
\label{subsec:R1}


This case is the natural extension of the results that have been already proved for the linear process (see Proposition 1 in \cite{ferland_integer-valued_2006}).

Let $V : \mathbb{N}^2 \to \mathbb{R}_+$ be the function defined by 
$$V \couple{i}{j} := \alpha i + \beta j + 1,$$ 
where $\alpha,\beta >0$ are parameters to be chosen later. 

We then have that $V \couple{i}{j} \geq 1$ for all $\couple{i}{j} \in \mathbb{N}^2$. We look for $\varepsilon > 0$ such that $\Delta V(x) + \varepsilon V(x) \leq 0$ except for a finite number of $x \in \N^2$. 

Let $\varepsilon > 0$ to be properly chosen later. Then, 
\begin{align*}
\Delta V \couple{i}{j} + \varepsilon V \couple{i}{j} &= \sum_{k \in \mathbb{N}}\dfrac{e^{-s_{ij}}s_{ij}^k}{k !}(\alpha k + \beta i + 1) - (\alpha i + \beta j + 1) + \varepsilon(\alpha i + \beta j + 1)\\
&= \alpha s_{ij} + i(\beta - \alpha + \alpha \varepsilon) + j(\beta \varepsilon - \beta) + \varepsilon \,,
\label{drift geometric ergodic}
\end{align*}

Note that $s_{ij} = 0$ or $s_{ij} = ai + bj + \lambda > 0$. In both cases, $\Delta V + \varepsilon V$ is a linear function of $\couple{i}{j} \in \N^2$. We will thus choose $\alpha, \beta$  such that the coefficients of $\Delta V + \varepsilon V$ are negative, so there will be only a finite number of $\couple{i}{j}$ that satisfies $\Delta V \couple{i}{j} + \varepsilon V \couple{i}{j} \geq 0$.

Let us first consider couples $\couple{i}{j}$ such that $s_{ij}=0$. According to the above, it is sufficient to have :
$$\left\{ \begin{array}{ll} \beta - \alpha + \alpha \varepsilon < 0 \\ \beta \varepsilon - \beta < 0 \end{array} \right. \Longleftrightarrow \left\{ \begin{array}{ll} \beta < \alpha(1-\varepsilon) \\ \varepsilon < 1 \end{array} \right..$$

In the sequel, we impose $\varepsilon <1$.

If $s_{ij} = ai+bj+\lambda > 0$, then :
$$\Delta V\couple{i}{j}+ \varepsilon V \couple{i}{j} =  i(\alpha a - \alpha + \beta + \alpha \varepsilon) + j(\alpha b + \beta \varepsilon - \beta) + \lambda \alpha + \varepsilon.$$
For the same reasons as before, it is sufficient to have $\alpha,\beta > 0$ such that :
$$\left\{ \begin{array}{ll} \alpha a - \alpha + \beta + \alpha \varepsilon< 0 \\ \alpha b + \beta \varepsilon - \beta < 0 \end{array} \right. \Longleftrightarrow \left\{ \begin{array}{ll}\beta < \alpha ( 1 - a - \varepsilon)  \\ \beta > \dfrac{\alpha b}{1-\varepsilon} \hspace{1 cm} (\mbox{since } \varepsilon < 1) \end{array} \right..$$
Let $\alpha := 1$. With the above statements we thus want to choose $\beta, \varepsilon > 0$ such that :
$$\left\{ \begin{array}{ll} \dfrac{b}{1-\varepsilon} < \beta < 1-a-\varepsilon \\ \beta < 1-\varepsilon \end{array} \right. \hspace{0.5cm} \textit{i.e.,} \hspace{0.5cm} \dfrac{b}{1-\varepsilon} < \beta < \min \{ 1-a-\varepsilon, 1 - \varepsilon \}.$$
Recall that $a+b<1$, so it is possible to find $\varepsilon_0 \in (0,1)$ small enough so that :
$$ \forall \Tilde{\varepsilon} \leq \varepsilon_0, ~\dfrac{b}{1-\Tilde{\varepsilon}} < 1 -a-\Tilde{\varepsilon}.$$ 

\begin{itemize}[label=$-$]
\item If $a \geq 0$, then $\min \{ 1-a-\varepsilon, 1 - \varepsilon \} = 1 - a - \varepsilon$ and since $a < 1$, we can choose $\varepsilon \leq \varepsilon_0$ so small that $\dfrac{b}{1-\varepsilon} < \beta < \min \{ 1-a-\varepsilon, 1 - \varepsilon\}$ on one hand, and $1-a-\varepsilon > 0$ on the other hand. It is thus possible to choose $\beta > 0$ such that : 
$$\dfrac{b}{1-\varepsilon} < \beta < \min \{ 1-a-\varepsilon, 1 - \varepsilon \}.$$
\item If $a < 0$, then $\min \{ 1-a-\varepsilon, 1 - \varepsilon \} = 1 - \varepsilon$.
Since $b<1$, it is possible to set $\varepsilon \leq \varepsilon_0$ so small that $b < (1 - \varepsilon)^2$. Hence we have $\dfrac{b}{1-\varepsilon} < 1 - \varepsilon$, so that it is possible to choose $\beta > 0$ that satisfies our constrains. 
\end{itemize}

Note that $\Delta V\couple{0}{0} = \lambda > 0$. Hence, with $\alpha, \beta, \varepsilon > 0$ chosen as above, we have that :
$$\Delta V\couple{i}{j}\le  -\varepsilon V \couple{i}{j} \mbox{ except for a finite number of states } \couple{i}{j} \in \mathbb{N}^2.$$ 

This proves that a drift condition $D(V,\varepsilon,C)$ holds for finite set $C$, which yields the result. 

\subsection{Case $\mathcal R_2$}
\label{subsec:R2}

In this section, we assume that $a>0$ and $a^2+4b<0$. 
\begin{figure}[h!]
\centering
\includegraphics[scale=0.3]{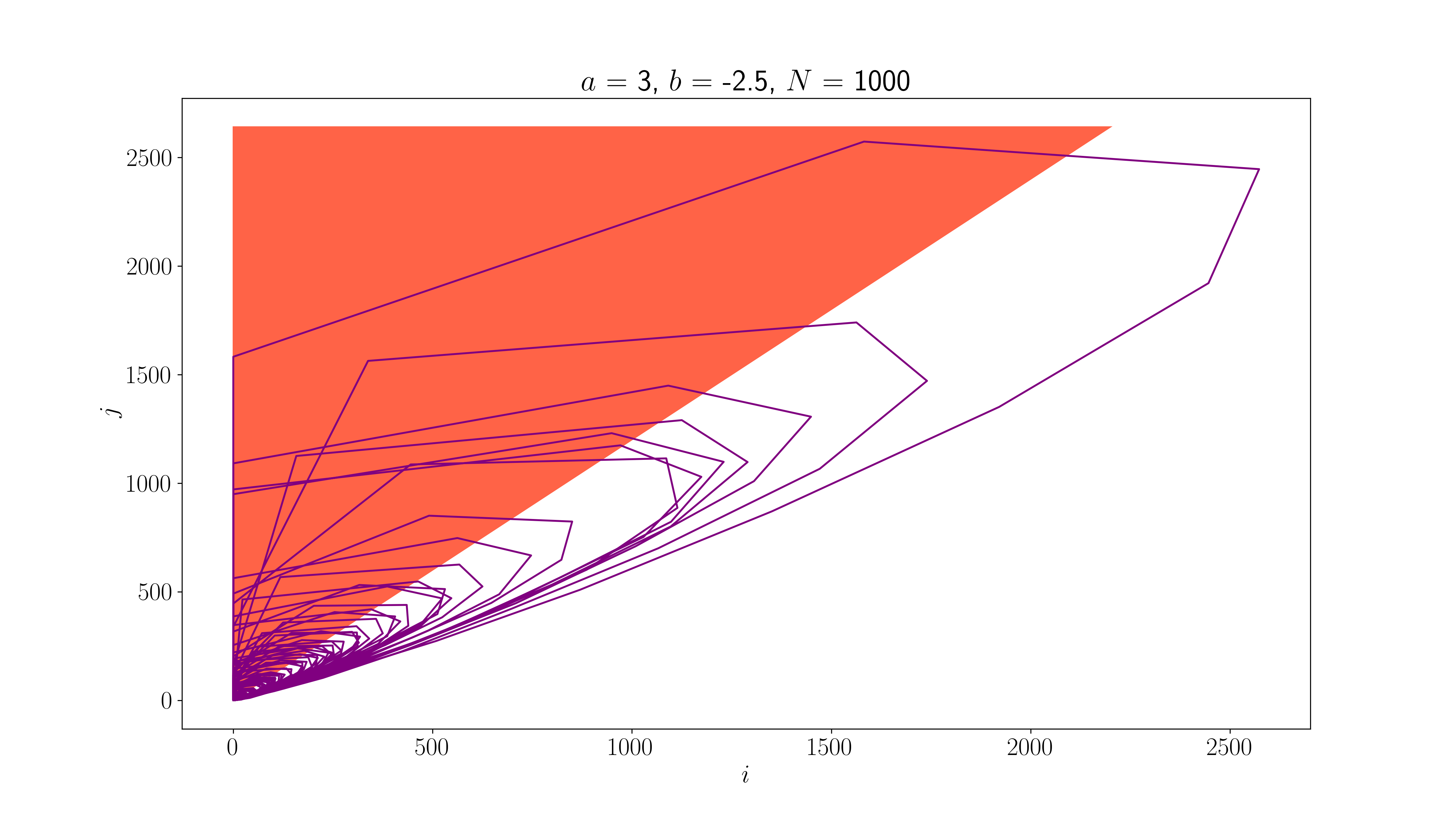}
\caption{An illustration of Case $\mathcal R_2$. Here, the parameters are $a = 3, b = -2.5$ and $N=1000$. In red, the set $A$ of couples $\couple{i}{j}$ such that $s_{ij}=s_{0i}=0$.}
\label{recurrenceasuptwo}
\end{figure}




The Lyapounov function we will consider is the following one:
$$\forall \couple{i}{j} \in \N^2, \quad V \couple{i}{j} = \dfrac{i}{j+1}+1.$$

Before getting into the details, a remark about this function. While we initially discovered it by trial and error, it has an interesting geometric interpretation. As seen in Figure~\ref{recurrenceasuptwo}, in case $\mathcal R_2$, the macroscopic trajectories of the Markov chain tend to turn counterclockwise until they hit the $j$-axis and eventually get pulled back to $\couple{0}{0}$. This provides a heuristic understanding of why $V$ should be a Lyapounov function. Indeed, it is an increasing function of the angle between the vector $(i,j)$ and the $j$-axis, and therefore $V(X_n)$ should have a tendency to decrease whenever $X_n$ is far away from the $j$-axis.

We now turn to the details. We will need to distinguish the region $A$ of the states $\couple{i}{j}$ where $s_{ij}=0$ (shown in red in Figure~\ref{recurrenceasuptwo}):
\begin{equation}
    \label{def:A}
A := \left\{ \couple{i}{j} \in \mathbb{N}^2 : s_{ij} = 0 \right\} = \left\{ \couple{i}{j} \in \mathbb{N}^2 : ai+bj+\lambda \le 0 \right\}\,.
\end{equation}
We have the following lemma:
\begin{lemma}
\label{lem:A_petite}The set $A$ is petite.
\end{lemma}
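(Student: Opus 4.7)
The plan is to show directly from the definition that $A$ is petite, using the two-step transition probability to the state $(0,0)$. Concretely, I would take $x_0 = (0,0)$ and $p_n = \mathbf{1}_{\{n=2\}}$, and prove that
\[
\inf_{(i,j)\in A} P^2\bigl((i,j),(0,0)\bigr) > 0.
\]

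The key observation is that when $(a,b)\in\mathcal R_2$, we have $a>0$ and $b < -a^2/4 \le 0$, so $b<0$. This forces two successive ``collapses'' of the chain starting from any state in $A$. First, by the very definition of $A$ we have $s_{ij}=0$, so the transition from $(i,j)\in A$ is deterministic: $X_1 = (0,i)$. Second, from a state of the form $(0,i)$ with $i\in\N$, we have
\[
s_{0i} = (bi + \lambda)_+ \le \lambda,
\]
since $b\le 0$ implies $bi+\lambda\le\lambda$. The next step $X_2$ is then Poisson$(s_{0i})$ paired with $0$, so
\[
P\bigl(X_2 = (0,0)\,\big|\, X_1 = (0,i)\bigr) = e^{-s_{0i}} \ge e^{-\lambda}.
\]

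Combining these two steps gives $P^2((i,j),(0,0))\ge e^{-\lambda}$ uniformly in $(i,j)\in A$, which by Definition~9.4.1 of \cite{dmps_markov_chains} (applied with the deterministic sampling distribution at $n=2$) means that $A$ is petite with petite measure $e^{-\lambda}\delta_{(0,0)}$. There is no real obstacle: the result is essentially a two-line observation, and its only subtlety is recognising that although $A$ is an infinite set (extending arbitrarily far in the $j$-direction), the dependence on $j$ is immediately killed after one step because the transition from $(i,j)\in A$ erases the $j$-coordinate.
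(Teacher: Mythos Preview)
Your proof is correct and follows essentially the same approach as the paper: both use the deterministic transition $(i,j)\to(0,i)$ for $(i,j)\in A$, then bound $P((0,i),(0,0))=e^{-s_{0i}}\ge e^{-\lambda}$ using $b<0$, to conclude $\inf_{(i,j)\in A}P^2((i,j),(0,0))\ge e^{-\lambda}>0$. One tiny slip: since $a>0$ in $\mathcal R_2$, you have $-a^2/4<0$ strictly (not $\le 0$), though this changes nothing in the argument.
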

\begin{proof}
By definition of $A$, we have $s_{ij} = 0$ for all $(i,j)\in A$, hence $P((i,j),(0,i)) = 1$. Furthermore, for every $i\in \N$, since $b<-a^2/4 < 0$, we have
\[
P((0,i),(0,0)) = e^{-s_{0i}} = e^{-(\lambda+bi)_+} \ge e^{-\lambda}.
\]
It follows that 
$$
\inf_{(i,j)\in A} P^2((i,j),(0,0)) \ge e^{-\lambda} > 0,
$$
which shows that $A$ is petite.
%
 \end{proof}





\begin{lemma}
\label{lem:driftR2}
There exists a finite set $C\subset \N^2$ and $\varepsilon\in(0,1)$, such that the drift condition $D(V,\varepsilon,K,A\cup C)$ is satisfied for some $K<\infty$.
\end{lemma}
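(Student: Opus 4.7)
The plan is to exploit the simple form of the transition to compute the drift $\Delta V(i,j)$ explicitly, to handle the set $A$ trivially, and to show that on $A^c$ the leading-order behaviour is governed by a quadratic form in the ratio $r=j/i$ whose negativity is exactly equivalent to the hypothesis $a^2+4b<0$.

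Since $X_1=(Y,i)$ with $Y\sim\mathcal{P}(s_{ij})$ conditional on $X_0=(i,j)$, a one-line computation using $\mathbb{E}[Y]=s_{ij}$ gives
\[
\Delta V(i,j)=\frac{s_{ij}}{i+1}-\frac{i}{j+1}.
\]
On $A$ we have $s_{ij}=0$, so $\Delta V(i,j)+\varepsilon V(i,j)=-(1-\varepsilon)\tfrac{i}{j+1}+\varepsilon$, which is bounded above by $\varepsilon$ for any $\varepsilon\in(0,1)$. The set $A$ can therefore be absorbed into the exceptional set of the drift condition without further work.

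On $A^c$ we have $s_{ij}=ai+bj+\lambda$; multiplying $\Delta V+\varepsilon V$ by the positive quantity $(i+1)(j+1)$ produces a polynomial in $(i,j)$ whose leading quadratic part, written in the ratio $r:=j/i$, is $i^2\,g_\varepsilon(r)$ with
\[
g_\varepsilon(r):=br^2+(a+\varepsilon)r-(1-\varepsilon).
\]
Since $b<0$, $g_\varepsilon$ is a downward-opening parabola whose maximum over $\mathbb{R}$ equals $(a+\varepsilon)^2/(4|b|)-(1-\varepsilon)$. At $\varepsilon=0$ this maximum reads $a^2/(4|b|)-1$, which is strictly negative because the hypothesis $a^2+4b<0$ rewrites as $a^2/(4|b|)<1$. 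By continuity one finds $\varepsilon\in(0,1)$ and $\eta>0$ with $\sup_{r\in\mathbb{R}}g_\varepsilon(r)\le-\eta$.

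To conclude, the defining constraint of $A^c$ forces $|b|j<ai+\lambda$, so $j\le C_0 i+C_1$ on $A^c$ for explicit constants; hence the linear and constant remainder terms of the expanded polynomial are dominated by $O(i)$, and the bound on $g_\varepsilon$ gives $\Delta V+\varepsilon V\le(-\eta i^2+O(i))/((i+1)(j+1))\le 0$ as soon as $i$ exceeds some explicit threshold $N$. The set $C:=\{(i,j)\in A^c:i\le N\}$ is then finite (using again $j\le C_0 N+C_1$ on it), $\Delta V+\varepsilon V$ is bounded on $C$ by some constant, and taking $K$ large enough absorbs both this constant and the upper bound $\varepsilon$ obtained on $A$. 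The main conceptual step is really the choice of $V$, motivated by the spiraling geometry discussed before the lemma; once $V$ is fixed the work reduces to the algebraic observation that the negativity of $g_\varepsilon$ coincides with the very inequality defining the region $\mathcal R_2$.
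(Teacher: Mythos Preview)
Your proof is correct and follows essentially the same route as the paper: compute $\Delta V+\varepsilon V$, clear the denominator $(i+1)(j+1)$, and show that the resulting quadratic form in $(i,j)$ is negative-definite precisely when $a^2+4b<0$ (your condition $\sup_r g_\varepsilon(r)<0$ is just negative-definiteness written in the ratio $r=j/i$), so that only finitely many points of $A^c$ can violate the drift inequality. The one minor difference is that your detour through the constraint $j\le C_0 i+C_1$ on $A^c$ is not needed: once the quadratic form is negative-definite, the full polynomial (quadratic plus linear) is negative outside a bounded subset of all of $\N^2$, so the finiteness of $C$ follows without first bounding $j$ in terms of $i$.
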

\begin{proof}
Since $\dfrac{a^2}{4}+b < 0$, there exists $\varepsilon \in (0,1)$ small enough such that $\frac{(a+\varepsilon)^2}{4}+b(1-\varepsilon)<0$.

Consider $(i,j) \not \in A$, and compute :
$$\Delta V \couple{i}{j} + \varepsilon V(i,j) = \dfrac{(\varepsilon-1)i^2+bj^2+(a+\varepsilon)ij + L_1(i,j)}{(i+1)(j+1)},$$
where $L_1(i,j)$ is a polynomial of degree 1.

On the numerator we recognize a quadratic form, and as $\frac{(a+\varepsilon)^2}{4}+b(1-\varepsilon)<0$, we have that this quadratic form is negative-definite. Thus, there is only a finite number of $(i,j) \not \in A$ such that $\Delta V(i,j) +\varepsilon V(i,j) > 0$. We define $C \subset \N^2 \setminus A$ to be the finite set of such $(i,j)$.

Note that for every $(i,j)\in A$, we have
\[
\Delta V \couple{i}{j} + \varepsilon V(i,j)\le \mathbb E_{(i,j)}[V(X_1)] = V(0,i) = 1.
\]
Hence, setting $K = 1\vee \max_{x\in C} \mathbb E_x[V(X_1)]\in [1,\infty)$, the finiteness of $K$ following from the fact that $C$ is finite, we have that the drift condition $D(V,\varepsilon,K,A\cup C)$ is satisfied.






\begin{figure}
\begin{center}
\includegraphics[scale=0.4]{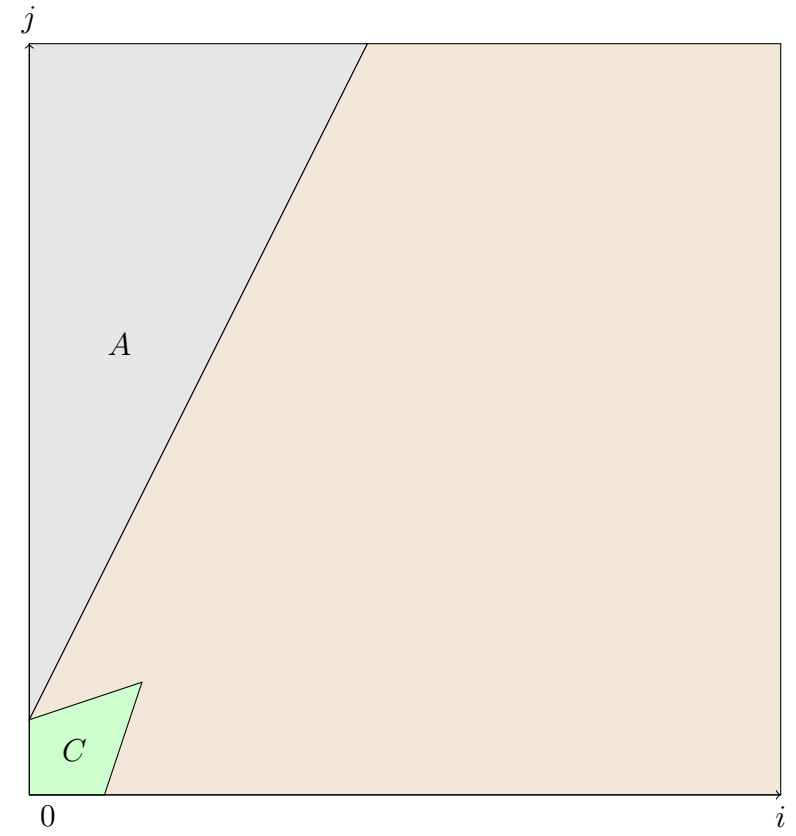}
\end{center}
\caption{\label{fig:R2_statespace}Graphical representation of the sets $A,C$ described above.}
\end{figure}

Figure~\ref{fig:R2_statespace} illustrates the cutting of the state space that we just described.
\end{proof}

In the case $\mathcal{R}_2$, by Lemma~\ref{lem:A_petite} and Lemma~\ref{lem:driftR2}, we can now apply Proposition~\ref{prop:Lyapounov}. 
Note that $A\cup C$ is petite because $A$ is petite (Lemma~\ref{lem:A_petite}), $C$ is finite, hence petite, and the union of two petite sets is again petite. This yields the proof of the case R2 of Theorem~\ref{thm:classification}.

\subsection{Case $\mathcal R_3$}
\label{subsec:R3}

To finish the proof of Theorem~\ref{thm:classification}, it suffices to consider parameters $a$ and $b$ such that $1\le a<2$ and $-\frac{a^2}{4} < b < 1-a$. However, for the sake of conciseness,  we will prove the ergodicity of the Markov chain on a larger space, namely $\mathcal R_3$. As a consequence, this case will cover some parameters sets which have already been considered in case $\mathcal R_2$. Note that this does not represent any issue in our strategy of proof. The choice of $\mathcal{R}_3$ will become clearer later on. 

We will thus assume here that $1\le a<2$ and $-1<b<1-a$. Let us denote by $V$ the following function :  
$$\forall (i,j) \in \mathbb{N}^2, \quad V \couple{i}{j} := 1 + \left(i^2 - aij + \dfrac{b^2+1}{2} j^2\right)\mathds{1}_{A^c}(i,j) .$$

First notice that the quadratic form in $V$ is positive-definite. 

Indeed, if $1\leq a < 2$, then $b^2 > (1-a)^2$ and :
 $$4 \times \dfrac{b^2+1}{2} - a^2 > 2(1-a)^2+2-a^2 = (a-2)^2 > 0.$$ 
Thus, the function $V$ satisfies $V \geq 1$.


Compute, for $(i,j) \not \in A$ and $\varepsilon \in (0,1)$ to be properly chosen later :
\begin{align*}
    \Delta V(i,j) + \varepsilon V(i,j) &= \sum_{k=0}^\infty \dfrac{e^{-s_{ij}} s_{ij}^k}{k!} V(k,i) + (\varepsilon-1)V(i,j) \\
    &\leq \sum_{k=0}^\infty \dfrac{e^{-s_{ij}} s_{ij}^k}{k!} \left( 1+\left(k^2 - aki + \dfrac{b^2+1}{2} i^2\right)\right) + (\varepsilon-1)V(i,j) \\
    &= s_{ij}(s_{ij}+1)-ais_{ij}+\dfrac{b^2+1}{2}i^2+(\varepsilon - 1)V(i,j) + 1\\
    &= \left( \dfrac{b^2-1}{2}+\varepsilon \right)i^2+a(b+1-\varepsilon)ij+\left( \dfrac{b^2(1+\varepsilon)+\varepsilon-1}{2}\right)j^2 + L_2(i,j),
\end{align*}
where $L_2(i,j)$ is a polynomial of degree 1. 

We want to choose $\varepsilon \in (0,1)$ such that the above quadratic form is negative-definite, that is, such that :
\begin{equation}
\label{condition_drift_R3}
\dfrac{b^2-1}{2}+\varepsilon < 0, \quad \text{ and } \quad  \left(\dfrac{b^2-1}{2}+\varepsilon\right) \left(\dfrac{b^2(1+\varepsilon)+\varepsilon-1}{2}\right)-\dfrac{a^2}{4}(b+1-\varepsilon)^2 > 0.
\end{equation}

On the one hand, we have $b^2-1<0$. On the other hand, the second inequality in \eqref{condition_drift_R3} can be written as follows :
$$(b^2-1)^2-a^2(b+1)^2 + k_{\varepsilon, a,b} > 0,$$
where $k_{\varepsilon,a,b} \in \R$ satisfies $k_{\varepsilon,a,b} \underset{\varepsilon \to 0}{\longrightarrow} 0$.

In addition, note that : 
$$ (a,b) \in \mathcal{R}_3 \Longrightarrow (b^2-1)^2 - a^2(b+1)^2 > 0.$$

From the foregoing, we deduce that there exists $\varepsilon \in (0,1)$ small enough such that both conditions of \eqref{condition_drift_R3} are satisfied. Thus, there is only a finite number of $(i,j) \not \in A$ such that $\Delta V(i,j) +\varepsilon V(i,j) > 0$. We define $C \subset \N^2 \setminus A$ to be the finite set of such $(i,j)$.

Finally, similarly as in Lemma~\ref{lem:A_petite}, the set $A$ is petite, because $b <1-a \leq 0$.  Furthermore similarly as in the case $\mathcal R_2$, for all $(i,j)\in A$, $\mathbb E_{(i,j)}(V(X_1))=V(0,i)$ is bounded, since $(0,i)\in A$ except for a finite number of $i$. Since the set $C$ is finite, we have that $A \cup C$ is a petite set and up to an adequate choice of $K$ the drift condition $D(V,\varepsilon,K, A\cup C)$ is satisfied.

\section{Proof of Theorem~\ref{thm:classification}: transience}
\label{sec:transience}

In this section, we show that the Markov chain $(X_n)_{n\ge0}$ is transient in the regime $\mathcal T$ of the parameters. We will distinguish between the following two cases:

\begin{itemize}
\item Case T1 : $a < 0, b>1$ (section \ref{subsec:T1}).
\item Case T2 : $0\leq a<2$ and $a+b>1$ or $a \geq 2$ and $a^2+4b > 0$ (section \ref{subsec:T2}).
\end{itemize}

In both cases, we will apply the following lemma:

\begin{lemma}
\label{lem:strategy_transience}
Let $S_1, S_2,\ldots$ be a sequence of subsets of $\N^2$ and $0<m_1 < m_2<\dots$ an increasing sequence of integers. Suppose that
\begin{enumerate}
    \item On the event $\bigcap_{n\ge 1} \{X_{m_n} \in S_n\}$, we have $X_n \ne (0,0)$ for all $n\ge 1$,
    \item $\P_{(0,0)}(X_{m_1}\in S_1) > 0$ and for all $n\ge 1$ and every $x\in S_n$, we have $\P_x(X_{m_{n+1}-m_n} \in S_{n+1}) > 0$.
    \item There exist $(p_n)_{n\ge 1}$ taking values in $[0,1]$ and such that $\sum_{n\ge 1} (1-p_n) < \infty$, such that
    \[
    \forall n\ge 1: \forall x\in S_n: \P_x(X_{m_{n+1}-m_n} \in S_{n+1}) \ge p_n.
    \]
\end{enumerate}
Then the Markov chain $(X_n)_{n\ge0}$ is transient.
\end{lemma}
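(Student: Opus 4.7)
The plan is to prove transience by first exhibiting, with strictly positive probability, an infinite trajectory starting from $(0,0)$ that never returns to $(0,0)$, and then leveraging the irreducibility established in Section~\ref{sec:associated_markov_chain} to conclude.

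The core estimate is a lower bound on
\[
\P_{(0,0)}\Bigl(\bigcap_{n\ge 1} \{X_{m_n} \in S_n\}\Bigr).
\]
Setting $\alpha := \P_{(0,0)}(X_{m_1} \in S_1) > 0$ by condition~(2), I would iterate the Markov property at the times $m_1 < m_2 < \cdots$, invoking condition~(3) at each step, to obtain
\[
\P_{(0,0)}\Bigl(\bigcap_{n=1}^{N} \{X_{m_n}\in S_n\}\Bigr) \ge \alpha \prod_{n=1}^{N-1} p_n
\]
for every $N\ge 1$. The hypothesis $\sum_n (1-p_n) < \infty$ forces $p_n\to 1$ and $-\log p_n \sim 1-p_n$, whence the infinite product $\prod_{n\ge 1} p_n$ is strictly positive. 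Continuity of probability then yields a strictly positive lower bound for the full intersection.

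By condition~(1), this intersection is contained in $\{X_n \ne (0,0)\text{ for all } n\ge 1\}$, so the chain starting at $(0,0)$ has positive probability of never returning to $(0,0)$. By the strong Markov property, the number of visits to $(0,0)$ is stochastically dominated by a geometric random variable with positive success probability, and is therefore almost surely finite. Combined with the accessibility of $(0,0)$ from every state, the standard dichotomy for irreducible Markov chains (see \cite{dmps_markov_chains}) upgrades this to transience in the sense of Theorem~\ref{thm:classification}.

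The lemma is essentially an abstract tool and its proof poses no serious obstacle beyond the standard fact that $\sum_n (1-p_n)<\infty$ implies $\prod_n p_n > 0$. The genuine difficulty in the transience part of Theorem~\ref{thm:classification} will be the construction, in each of the regimes T1 and T2, of sequences $(S_n)_{n\ge 1}$ and $(m_n)_{n\ge 1}$ that simultaneously verify conditions~(1)--(3).
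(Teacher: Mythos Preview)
Your argument follows the same route as the paper's, but there is one small gap. You claim that $\sum_{n\ge 1}(1-p_n)<\infty$ implies $\prod_{n\ge 1}p_n>0$; however, assumption~(3) only requires $p_n\in[0,1]$, so finitely many of the $p_n$ may vanish (and indeed, in the applications of Sections~\ref{subsec:T1} and~\ref{subsec:T2} the paper takes $p_n=(1-C/r^n)_+$, which is zero for small~$n$). In that case $\alpha\prod_{n\ge 1}p_n=0$ and your lower bound is vacuous. The asymptotic $-\log p_n\sim 1-p_n$ you invoke only controls the tail of the product.

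The paper circumvents this exactly as one would expect, and this is precisely where assumption~(2) is needed beyond the first step: the summability of $1-p_n$ yields an $n_0$ with $\prod_{n\ge n_0}p_n>0$, while assumption~(2) is used to guarantee $\P_{(0,0)}\big(X_{m_n}\in S_n\text{ for all }n\le n_0\big)>0$. Combining the two pieces via the Markov property at time $m_{n_0}$ gives the desired strictly positive probability. Your proof becomes complete once you insert this splitting; the remainder of your argument (continuity of probability, accessibility of $(0,0)$, and the recurrence/transience dichotomy) is correct and matches the paper.
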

\begin{proof}
Since $(0,0)$ is an accessible state, it is enough to show that
\[
\P_{(0,0)}(X_n \ne (0,0)\,,\forall n\ge 1) > 0.
\]
Using assumption 1, it is sufficient to prove that
\begin{equation}
\label{eq:strategy_transience_1}
\P_{(0,0)}(X_{m_n} \in S_n\,,\forall n\ge 1) > 0.
\end{equation}
By assumption 3, there exists $n_0\ge 1$ such that $\prod_{n\ge n_0} p_n > 0$. It follows that for every $x\in S_{n_0}$,
\[
\P_x(X_{m_n-m_{n_0}}\in S_{n}\,,\forall n>n_0) \ge \prod_{n\ge n_0} p_n > 0.
\]
Furthermore, by assumption 2, we have that
\[
\P_{(0,0)}(\forall n\le n_0\,, X_{m_n}\in S_n) > 0.
\]
Combining the last two inequalities yields \eqref{eq:strategy_transience_1} and finishes the proof.
\end{proof}

\subsection{Case T1}
\label{subsec:T1}

In this region of parameters, the Markov chain eventually reaches the $i$ and $j$ axes. Indeed, since $a<0$, if $(X_n)$ hits a state $\couple{i}{0}$ with $i \geq -\frac{\lambda}{a}$, as $s_{i0} = (ai+\lambda)_+ = 0$, the next step of the Markov chain will be $\couple{0}{i}$. Afterwards, the Markov chain will hit the state $\couple{\mathcal{P}(bi+\lambda)}{0}$, with $bi+\lambda > i$. Consequently, to follow the example, if we focus on the $i$ axe, starting from $\couple{k}{0}$ with $k$ big enough, the Markov chain will return in two steps to a state $\couple{k'}{0}$ belonging to the $i$-axe, satisfying $k' > k$ with high probability. 
\begin{figure}[h!]
\includegraphics[scale=0.3]{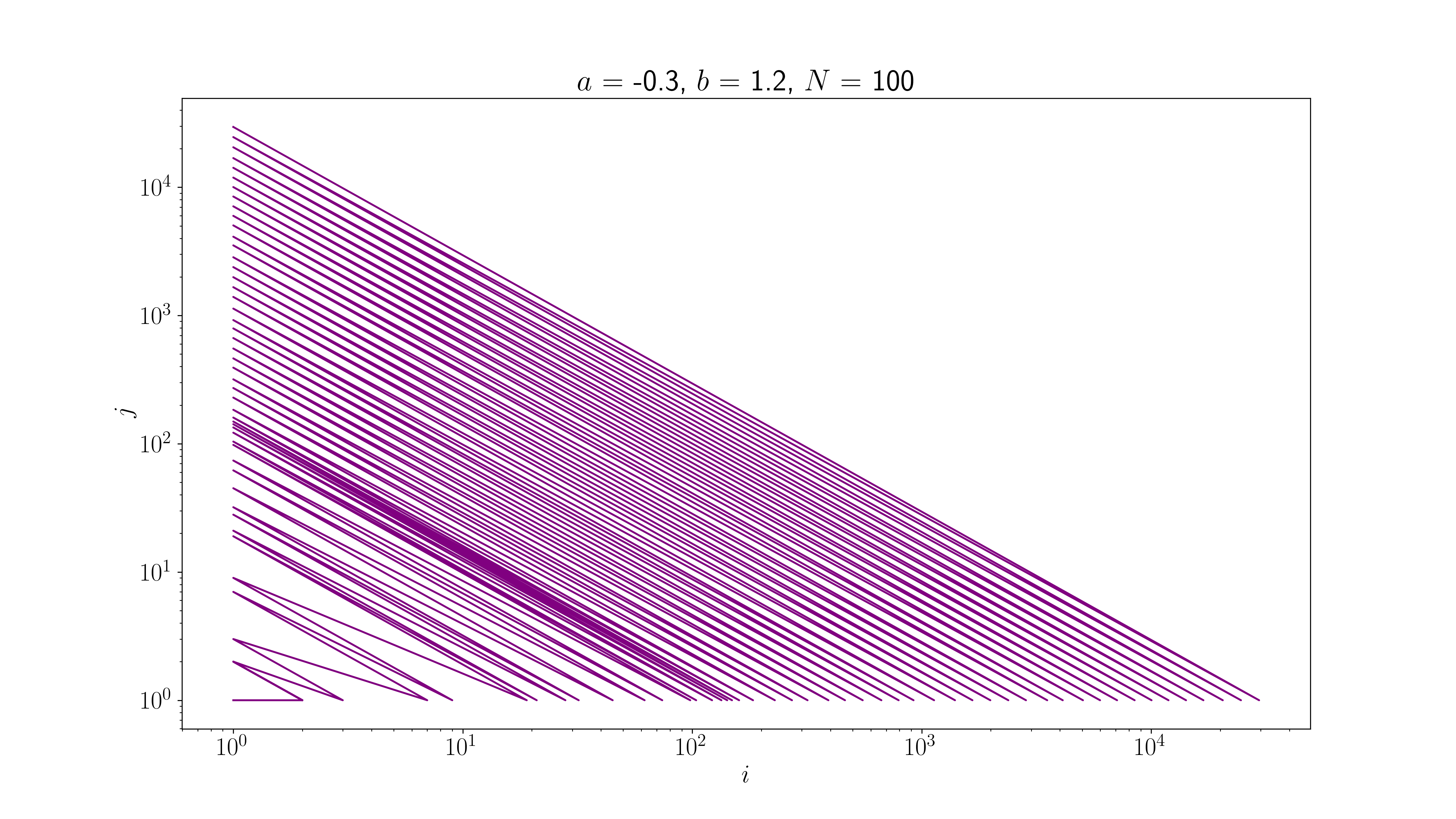}
\caption{Log-log plot of a typical trajectory of $(X_n)$, to make the erratic behaviour of the first points of the Markov chain more visible. Here, the parameters are $a = -0.3, b=1.2$ and $N=100$.}
\end{figure}

In order to formalize these observations, it is very natural to consider the Markov chain induced by the transition matrix $P^2$, namely $(X_{2n+1})_{n\geq 0}$. 
For $i \geq \dfrac{-\lambda}{a}, \quad s_{i0}=0$ and thus  : 
\begin{equation}
\label{poisson law}
\begin{aligned}
\mathbb{P}\left( X_{2n+1} = \couple{k}{0} ~\big| ~ X_{2n-1} = \couple{i}{0}, i \geq \dfrac{-\lambda}{a}\right) &= \dfrac{e^{-s_{0 i}} s_{0 i}^k}{k!} \\
&= \dfrac{e^{-(bi+\lambda)} (bi+\lambda)^k}{k!}.
\end{aligned}
\end{equation}
Note that if $a \leq -\lambda$, this results holds for $i \in \mathbb{N}$.

Equation \eqref{poisson law} means that if $\Tilde X_{2n-1} \geq -\frac{\lambda}{a}$, and $\Tilde X_{2n-2} = 0$, then $\Tilde X_{2n} = 0$, and $\Tilde X_{2n+1}$ is a Poisson random variable with parameter $b\Tilde X_{2n-1} + \lambda$.

Let us now prove our statement. 

\begin{proof}[Proof of the transience of $(X_n)$ when $a<0$ and $b>1$]
%
%
%
%
%
%
%

Fix $r\in (1,b)$. We wish to apply Lemma~\ref{lem:strategy_transience} with
\[
m_n = 2n-1,\quad n\ge 1
\]
and
\[
S_n = \{(i,0)\in \N: i \ge r^n\}.
\]
We verify that the assumptions (1)-(3) from Lemma~\ref{lem:strategy_transience} hold. For the first assumption, note that if $X_{2n-1} = (i,0) \in S_n$, then $X_{2n} = (j,i)$ for some $j$, hence $X_{2n-1} \ne (0,0)$ and $X_{2n} \ne (0,0)$ since $i\ge1$. In particular, assumption (1) holds.

We now verify that the second assumption holds.
For states $x,y\in \N^2$, write $x\to_1 y$ if $\P_x(X_1 = y) > 0$.  Furthermore, for $S\subset \N^2$, write $x\to_1 S$ if $x\to_1 y$ for some $y\in S$. Note that $(0,0) \to_1 (i,0)$ for every $i\in\N$, so that $(0,0)\to_1 S_1$. Now, for every $i\in \N$, we have $(i,0)\to_1 (0,i)$, and then, because $b>0$, $(0,i)\to_1 (j,0)$ for every $j\in\N$. In particular, from every $x\in S_n$, we can indeed reach $S_{n+1}$ in two steps. Hence, the second assumption is verified as well.

We now prove the third assumption. We claim that there exists $n_0 \in\N$, such that the following holds:
\begin{align}
\label{eq:toshow1}
    \forall n\geq n_0, \forall x\in S_n: \P_x(X_2\in S_{n+1}) \ge 1 - \dfrac{b}{(b-r)^2r^{n}}.
\end{align}
To prove \eqref{eq:toshow1}, first note that according to the earlier remark on \eqref{poisson law}, if $n_0$ is chosen such that $r^{n_0} \geq -\lambda/a$, then starting from a state $(i,0)$ with $i \geq r^{n_0}$, we have $\Tilde{X}_1 = 0$ almost surely and $\Tilde{X}_{2}\sim \mathcal{P}(bi + \lambda)$. Therefore, if $n\ge n_0$ and $i \ge r^n \ge r^{n_0}$,
\begin{align*}
1 - \P_{(i,0)}(\Tilde X_2 \ge r^{n+1}, \Tilde X_1 = 0) 
&= \P_{(i,0)}(\Tilde X_2 < r^{n+1})\\
&\le \proba{\mathcal{P}(bi + \lambda)<r^{n+1}}\\
&\leq \proba{\mathcal{P}(br^{n}) <r^{n+1}} \\
&= \proba{\mathcal{P}(br^{n})-br^{n} <r^{n}(r-b)} \\
&\leq \color{black} \proba{|\mathcal{P}(br^{n})-br^{n}| > r^{n}(b-r)} \\
&\leq \dfrac{b}{(b-r)^2r^{n}},
\end{align*}
by the Bienaymé-Chebychev inequality. This proves \eqref{eq:toshow1}. Now, \eqref{eq:toshow1} implies,
\[
\forall x\in S_n: \P_x(X_2\in S_{n+1}) \ge p_n \coloneqq \left(1 - \dfrac{b}{(b-r)^2r^{n}}\right)_+,
\]
and
\[
\sum_{n\ge 1} (1-p_n) \le \sum_{n\ge1} \dfrac{b}{(b-r)^2r^{n}} < \infty.
\]
This proves that the third assumption of Lemma~\ref{lem:strategy_transience} holds. The lemma then shows that the Markov chain is transient.
\end{proof}

\subsection{Case T2: $0\leq a<2$ and $a+b > 1$ or $a\geq 2$ and $a^2+4b > 0$}
\label{subsec:T2}

\begin{figure}
\centering
\includegraphics[scale=0.3]{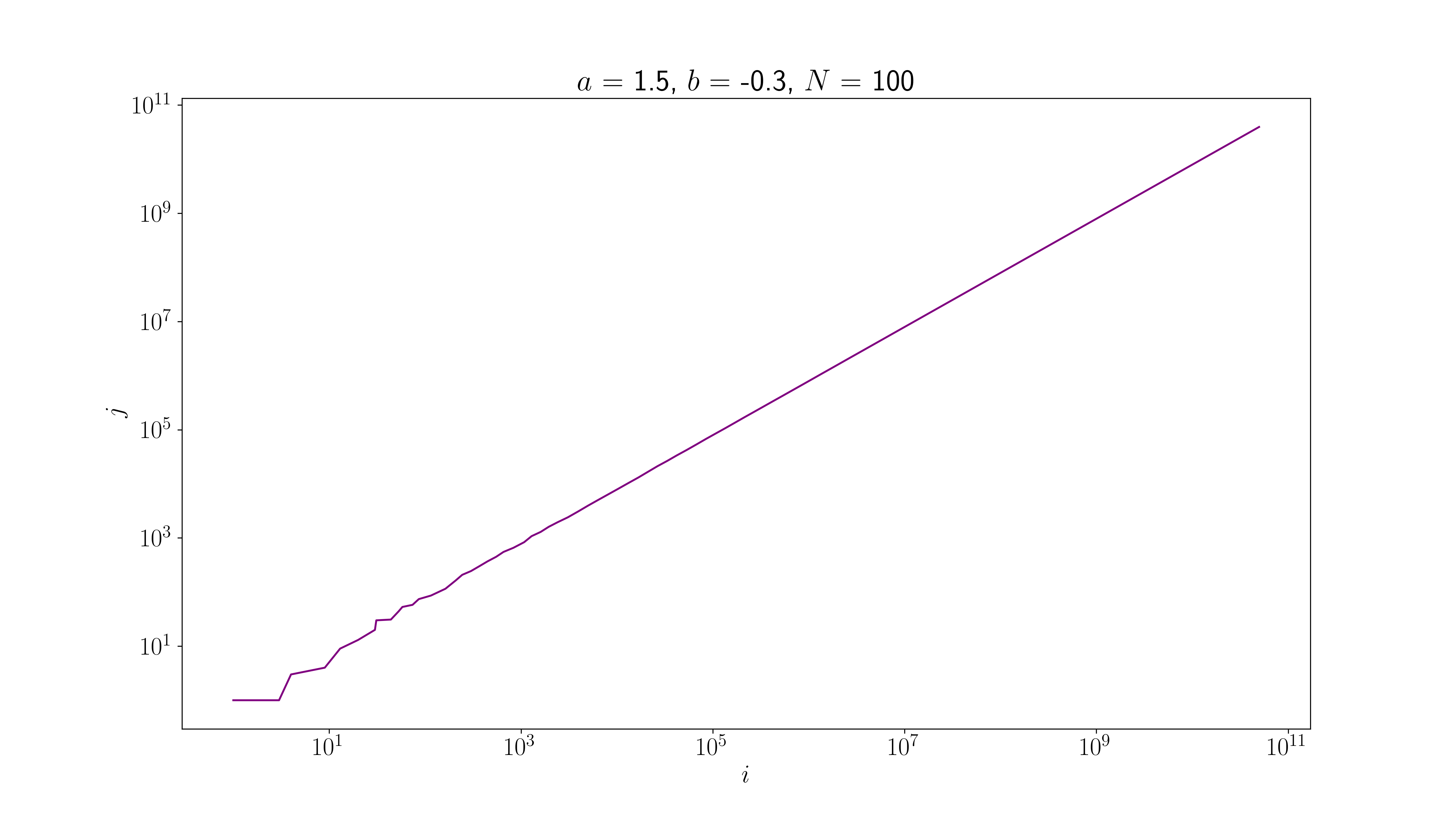}
\caption{\label{transiencebsupone}Log-log plot of a typical trajectory of $(X_n)$, with $a = 1.5, b=-0.3$ and $N=100$}
\end{figure}

For this case, we will take benefit of the comparison between the stochastic process $(\Tilde X_n)$ and its linear deterministic version.
Namely, let us consider the linear recurrence relation defined by $y_0, y_1 \in \N$ and :

\begin{equation}
\label{linear recurrence}
\forall n \geq 0, \quad y_{n+2} = ay_{n+1} + by_n + \lambda.
\end{equation}
The solutions to this equation are determined by the eigenvalues and eigenvectors of the matrix $\begin{pmatrix}
0 & b \\
1 & a
\end{pmatrix}$, which is the companion matrix of the polynomial $X^2 - aX - b$ (see Appendix A. for more details). 
An easy calculation shows that in case T2, we have $a^2+4b > 0$, hence the eigenvalues are simple and real-valued. We denote the largest eigenvalue by:
$$\theta \coloneqq \dfrac{a+\sqrt{a^2+4b}}{2}.$$
The following holds in case T2, as can be easily verified:
\begin{align}
    \label{eq:theta_1} \theta &> 1\\
    \label{eq:theta2+b} \theta^2 + b &> 0.
\end{align}
In fact, one can check that case T2 exactly corresponds to the region in the space of parameters $a,b$ where $\theta>1$, meaning that the sequence $(y_{n+1},y_n)_{n\ge0}$, with $(y_n)_{n\ge0}$ the solution to \eqref{linear recurrence}, grows exponentially inside the positive quadrant, along the direction of the eigenvector $(\theta,1)$.

%
%
In what follows, we fix $1 < r < \theta$, such that 
\begin{equation}
    \label{eq:r}
    r^2-ar-b < 0,
\end{equation}
where we use the fact that $\theta > 1$ is the largest root of the polynomial $X^2 - aX - b$. 


We split our study into two different sub-cases depending on the sign of $b$.

\subsubsection{Subcase T2a : $b \geq 0$}

In this case, we have $a\Tilde X_n + b \Tilde X_{n-1} + \lambda > 0$ for all $n\in\N$, and so $\Tilde X_{n+1} \sim \mathcal{P}(a\Tilde X_n + b\Tilde X_{n-1} + \lambda)$, i.e.,~no truncation is necessary. It is classical that in this case, $\Tilde X_n$ grows exponentially in $n$ almost surely, but we provide a simple proof for completeness. 

We therefore apply Lemma \ref{lem:strategy_transience} with  the sequence $m_n=n$ and 
$$S_n=\{ (i,j)\in\N^2, i\ge r^n, j\ge r^{n-1}\}.$$
With these notations, assumption 1 is automatically satisfied. Assumption 2 is also satisfied, because $(i,j)\to_1 (k,i)$ for every $i,j,k\in\N$, since $ai+bj+\lambda > 0$ for every $i,j\in\N$ as explained above.

In order to prove assumption 3, let us consider $n \in \N$ and let $(i,j)\in S_n$. By definition, starting from $(i,j)$, $\Tilde X_{1} \sim \mathcal{P}(a  i + bj + \lambda)$. Thus,
\begin{align*}
\P_{(i,j)} (\Tilde X_1 < r^{n+1})
&= \proba{\mathcal{P}(ai + bj + \lambda) < r^{n+1}} \\
&\leq  \proba{\mathcal{P}(ar^{n} + b r^{n-1}) < r^{n+1}} \\
&= \proba{\mathcal{P}(ar^{n} + b r^{n-1}) - (ar^{n} + br^{n-1}) < r^{n-1}(r^2-ar - b)}.
\end{align*}

Recall that $r^2-ar-b < 0$ by \eqref{eq:r}, which implies : 
\begin{align*}
\P_{(i,j)} (\Tilde X_1 < r^{n+1}) &\leq \proba{\left| \mathcal{P}(ar^n + b r^{n-1}) - (ar^n + br^{n-1}) \right| >  -r^{n-1}(r^2-ar - b)} \\
&\leq \dfrac{(a+b)r^2}{r^{n}(r^2-ar-b)^2} ,
\end{align*}
where we used again the Bienaymé-Chebychev inequality. Thus $$
\P_{(i,j)} (X_1\in S_{n+1})\ge  \left(1-\dfrac{(a+b)r^2}{r^{n}(r^2-ar-b)^2}\right)_+ \eqqcolon p_n.
$$
This allows to conclude the proof with Lemma \ref{lem:strategy_transience}, as in the previous case.

\subsubsection{Subcase T2b : $b < 0$}

In this case, because of the negativity of $b$ it is more difficult to find an adequate lower-bound of $a\Tilde X_n + b\Tilde X_{n-1}$. We will thus prove a stronger result, which is illustrated on Figure \ref{transiencebsupone} : asymptotically, the process $(\Tilde X_n)$ grows exponentially and the ratio $\Tilde X_{n+1}/\Tilde X_n$ is close to $\theta$.

From \eqref{eq:r} and \eqref{eq:theta2+b}, we can choose $\varepsilon > 0$ small enough such that : 
\begin{align}
\label{eq:T2b_cond_r_eps}
    r^2 - a(r-\varepsilon)-b < 0\\
\label{eq:T2b_cond_theta_eps}
    \theta^2-\theta\varepsilon+b > 0.
\end{align}
We will use Lemma \ref{lem:strategy_transience} using $m_n = n$ and for $n \in \N^*$ : 
$$S_n=\left\{ (i,j)\in\N^2, i\ge r^n, j\ge r^{n-1}, \left|\frac{i}{j}-\theta \right| \leq \varepsilon \right\}.$$
Note that Assumption 1 from Lemma~\ref{lem:strategy_transience} is again automatically verified. Assumption 2 is also verified, since for $(i,j)\in S_n$, we have
\begin{equation}
\label{eq:T2b_param_positive}
a i + b  j + \lambda > \big(a(\theta - \varepsilon)+b\big) j \geq \big(a(r-\varepsilon)+b\big)j > 0,
\end{equation}
by \eqref{eq:T2b_cond_r_eps}, and so $(i,j)\to_1 (k,i)$ for every $k\in \N$.

We now show that Assumption 3 from Lemma~\ref{lem:strategy_transience} is verified. Let $n\in \N$ and $(i,j) \in S_n$. Then :
\begin{align} 
\label{eq:case_T2b}
\P_{(i,j)}(X_{1}\notin S_{n+1} )&\leq \P_{(i,j)}\left(\Tilde X_{1} < r^{n+1} \right) + \P_{(i,j)}\left(\left| \dfrac{\Tilde X_{1}}{i}-\theta\right| > \varepsilon\right).
\end{align}
We first bound the first term on the right-hand side of \eqref{eq:case_T2b}. By \eqref{eq:T2b_param_positive}, we have
\begin{align*}
&\P_{(i,j)}\left( \Tilde X_{1} < r^{n+1}\right) \\
&= \P\left( \mathcal{P}(ai + b j + \lambda) < r^{n+1} \right)\\
&\leq  \P\left(\mathcal{P}\Big(\big[a(r-\varepsilon)+b\big]r^{n-1}\Big) < r^{n+1}\right) \\
&= \P\left(\mathcal{P}\Big(\big[a(r-\varepsilon)+b\big]r^{n-1}\Big)-\big[a(r-\varepsilon)+b\big]r^{n-1} < r^{n-1}\big[r^2 - a(r-\varepsilon)-b\big]\right).\end{align*}

Furthermore, using \eqref{eq:T2b_cond_r_eps} and applying the Bienaymé-Chebychev inequality we obtain :
\begin{equation}
\label{C1ineq}
\begin{aligned}
&\P_{(i,j)}\left(\Tilde X_{1} < r^{n+1}\right) \\
&\leq \P\left(\left| \mathcal{P}\Big(\big[a(r-\varepsilon)+b\big]r^{n-1}\Big)-\big[a(r-\varepsilon) + b\big]r^{n-1} \right| > -r^{n-1}\big[r^2 - a(r-\varepsilon)-b\big]\right) \\
&\leq \dfrac{\big[a(r-\varepsilon)+b\big]r^{n-1}}{\Big[r^{n-1}\big[r^2 - a(r-\varepsilon)- b\big]\Big]^2} \\
&= \dfrac{\big[a(r-\varepsilon)+b\big]r}{r^n\big[r^2 - a(r-\varepsilon)- b\big]^2}= \dfrac{C_1}{r^n},
\end{aligned}
\end{equation}
where $C_1$ is a constant that does not depend on $n$.
\vspace{0.3cm}

We now bound the second term on the right-hand side of \eqref{eq:case_T2b}. 
Let us write :
\begin{align*}
\left| \dfrac{\Tilde X_{1}}{i}-\theta\right| &= \left| \dfrac{\Tilde X_{1}-\E_{(i,j)}[\Tilde X_{1} ]}{i}\right| + \left| \dfrac{\E_{(i,j)} [\Tilde X_{1} ]}{i}-\theta \right|
\end{align*}
First notice that, for any $(i,j) \in S_n$ :
\begin{align}
\left| \dfrac{\E_{(i,j)}\left[\Tilde X_{1} \right]}{i}-\theta \right| &= \left| \dfrac{ai +bj+\lambda}{i}-\theta \right| 
= \left| a +b \dfrac{j}{i}+ \dfrac{\lambda}{i}-\theta \right| \nonumber\\
&\leq \underbrace{\left|a+\frac{b}{\theta}-\theta\right|}_{= 0} + |b| \left| \dfrac{j}{i} - \frac{1}{\theta} \right| + \frac{\lambda}{i} \nonumber\\
&< \dfrac{|b|}{\theta(\theta-\varepsilon)} \varepsilon + \frac{\lambda}{i},\label{eq:maj_moyenne}
\end{align}
where we used that if $|x-\theta|<\varepsilon$ and $\varepsilon < \theta$, then
\[
\left|\frac 1 x - \frac 1 \theta\right| = \frac{|\theta-x|}{x\theta} < \frac{\varepsilon}{\theta(\theta-\varepsilon)}.
\]
To prove that $\P_{(i,j)}\left(\left| \dfrac{\Tilde X_{1}}{i}-\theta\right| > \varepsilon\right) \leq \dfrac{C_2}{r^n}$, where $C_2$ is a constant that does not depend on $n$, we deduce from \eqref{eq:maj_moyenne} that it is sufficient to show that  :
$$\P_{(i,j)}\left( \dfrac{\Big| \Tilde X_{1}-\E_{(i,j)}[\Tilde X_{1}] \Big| + \lambda}{i} > \delta \varepsilon \right) \leq \frac{C_2}{r^n},$$
where, by \eqref{eq:T2b_cond_theta_eps}, we have
\[
\delta \coloneqq 1- \dfrac{|b|}{\theta(\theta-\varepsilon)} > 0.
\]
Furthermore, since $b<0$ and $(i,j)\in S_n$, we have that 
$$ai + bj +\lambda \leq  ai + \lambda \leq (a+\lambda)i.$$
We finally have, using the Bienaymé-Chebyshev inequality : 
\begin{align*}
\P_{(i,j)}\left(\dfrac{\Big| \Tilde X_{1}-\E_{(i,j)}[\Tilde X_{1}] \Big| + \lambda}{i} > \delta \varepsilon \right) 
&=\P_{(i,j)} \left( \Big| \Tilde X_{1}-\E_{(i,j)}[\Tilde X_{1}] \Big| > \delta \varepsilon i-\lambda \right) \\
&\leq\dfrac{(a+\lambda)i}{(\delta \varepsilon i-\lambda)^2}\\
&= \dfrac{a+\lambda}{\left( \delta \varepsilon \sqrt{i} - \lambda/ \sqrt{i}\right)^2} \color{black}\\
&\leq \dfrac{a+\lambda}{\left(\delta \varepsilon r^{n/2}-\lambda r^{-n/2}\right)^2} 
\end{align*}

Last inequality holds for a sufficiently large $n$. Indeed, since $i \geq r^n$, we always have 
$$\delta \varepsilon \sqrt{i} - \lambda/ \sqrt{i} > \delta \varepsilon r^{n/2}-\lambda r^{-n/2},$$ 
and for $n$ large enough we have 
$$\delta \varepsilon \sqrt{i}- \frac{\lambda}{\sqrt{i}} > 0.$$
\color{black}
This yields, for some constant $C_2<\infty$,
\begin{equation}
\label{C2ineq}
\P_{(i,j)} \left( \dfrac{\Big| \Tilde X_{1}-\E_{(i,j)}[\Tilde X_{1}] \Big| + \lambda}{i} > \left( 1- \dfrac{|b|}{\theta(\theta-\varepsilon)} \right) \varepsilon \right) \leq \frac{C_2}{r^n}.
\end{equation}
Combining \eqref{C1ineq} and \eqref{C2ineq} we have that :
$$\P_{(i,j)}(X_{1}\in S_{n+1}) \ge \left( 1- \dfrac{C_1+C_2}{r^n} \right)_+ =: p_n.$$
Which will finally lead us to the result, by using Lemma \ref{lem:strategy_transience} as before.

\section{Perspectives and open problems}

\subsubsection*{Critical behavior}
In the case of linear Hawkes processes, it is well known that, at criticality, the process achieves fractal-like, i.e.,~heavy-tail behavior, related to critical branching processes. 
It is tempting to believe that this should remain true on the whole boundary between the phases $\mathcal R$ and $\mathcal T$, but the fractal exponents might differ.

For the sake of completeness, we offer a numerical study of the various critical cases of the considered model, which indicates different behaviour depending whether $a<2$ or $a>2$. We present realizations of the process $(\Tilde X_n)$, as we believe it is simpler to visualize the behavioral differences compared to showing realizations of the Markov chain in $\mathbb N^2$. Given the diversity of the process behaviours, we anticipate the need for various probabilistic tools to describe the process evolution over long time spans. We consider the same setting as for the previous figures: an initial condition $\Tilde X_{-1}, \Tilde X_0 = 0$ and $\lambda = 1$. The number $N$ describes the number of simulated steps.
On Figure \ref{fig:critical_R1}, we observe a linear growth of the discrete time process $\widetilde{X}_n$, with oscillations to $0$ when $a<0$ and $b=1$ (left panel) and without oscillations in the case $a+b=1$.
\begin{figure}[h!]
    \centering
    \includegraphics[width=0.49\linewidth]{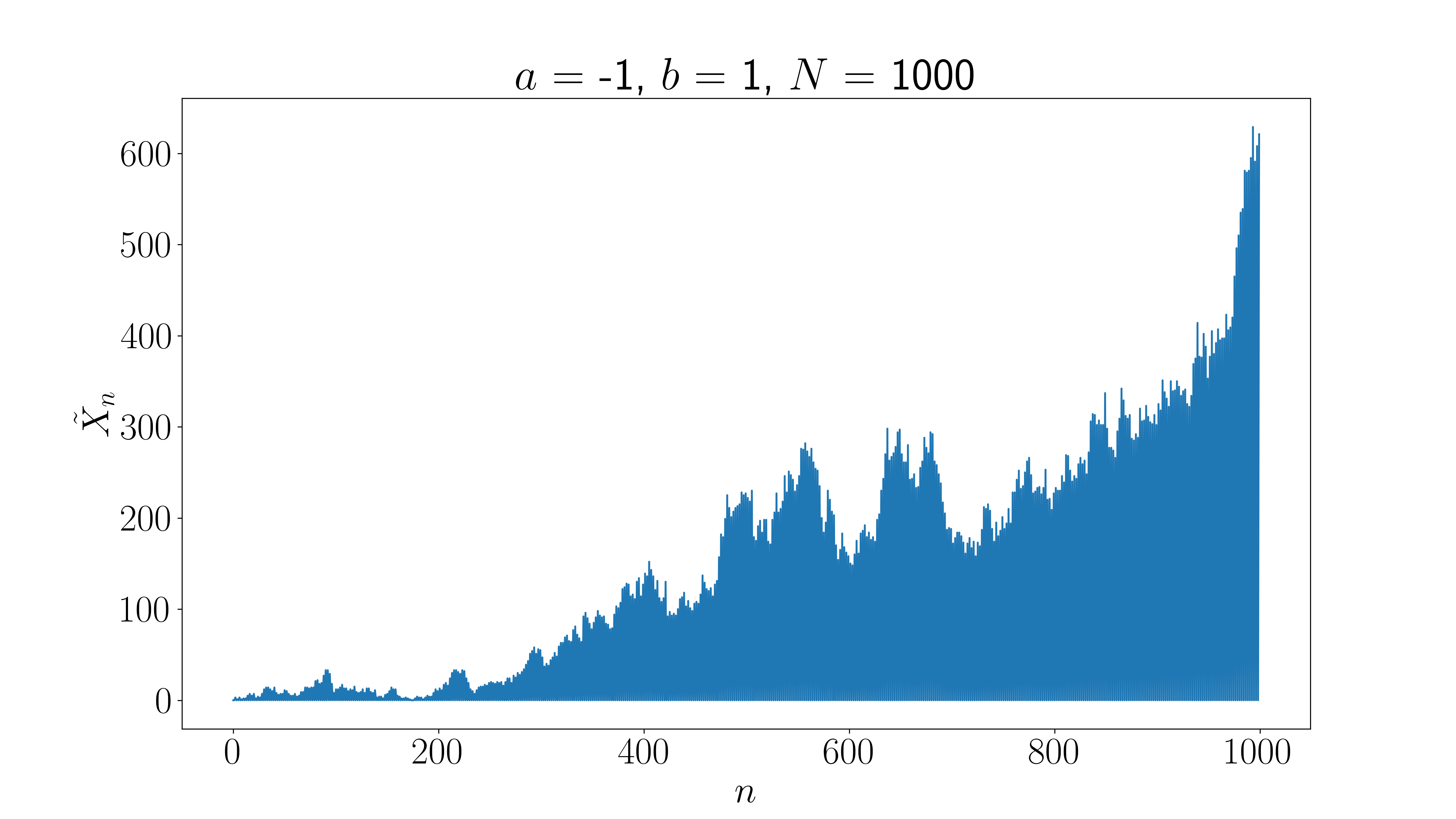}
    \includegraphics[width=0.49\linewidth]{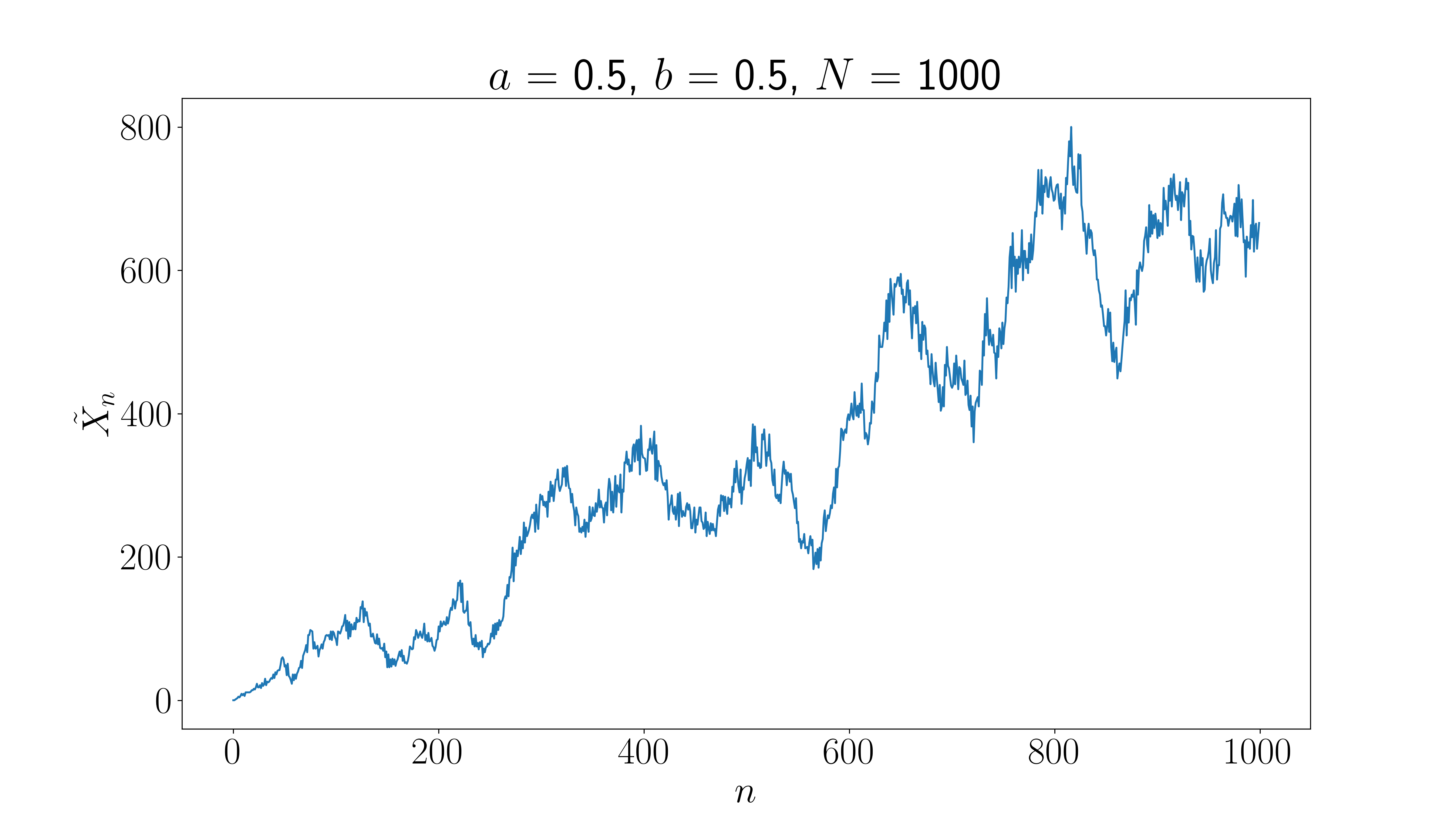}
    \caption{Trajectories of $(\widetilde{X}_n)_{0\le n\le 1000}$ for critical parameters $(a,b)=(-1,1)$ on the left and $(a,b)=(0.5,0.5)$ on the right. We observe a linear growth of the process as it could be expected in critical cases.}
    \label{fig:critical_R1}
\end{figure}
The situation seems to be different for $a\ge2$ and $b=-a^2/4$. When $a>2$ we observe an exponential growth (Fig. \ref{fig:critical_R2} (left)) similarly to the transient regime, while the case $a=2$ presents large excursions away from $0$ but deciphering transient or recurrent behavior is difficult. These simulations show that the study of these critical cases is an interesting research topic for the future.
\begin{figure}[h!]
    \centering
    \includegraphics[width=0.49\linewidth]{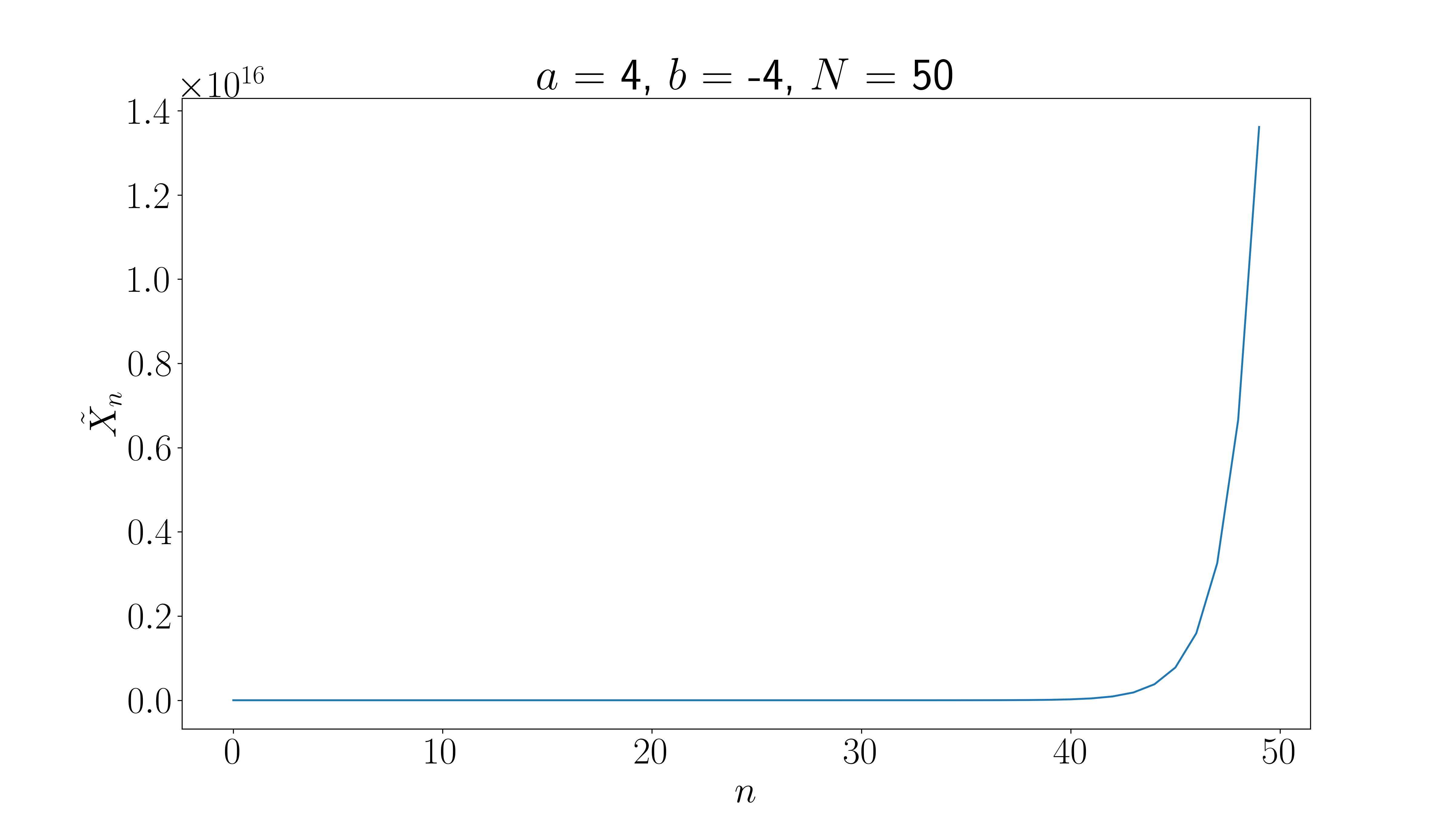}
    \includegraphics[width=0.49\linewidth]{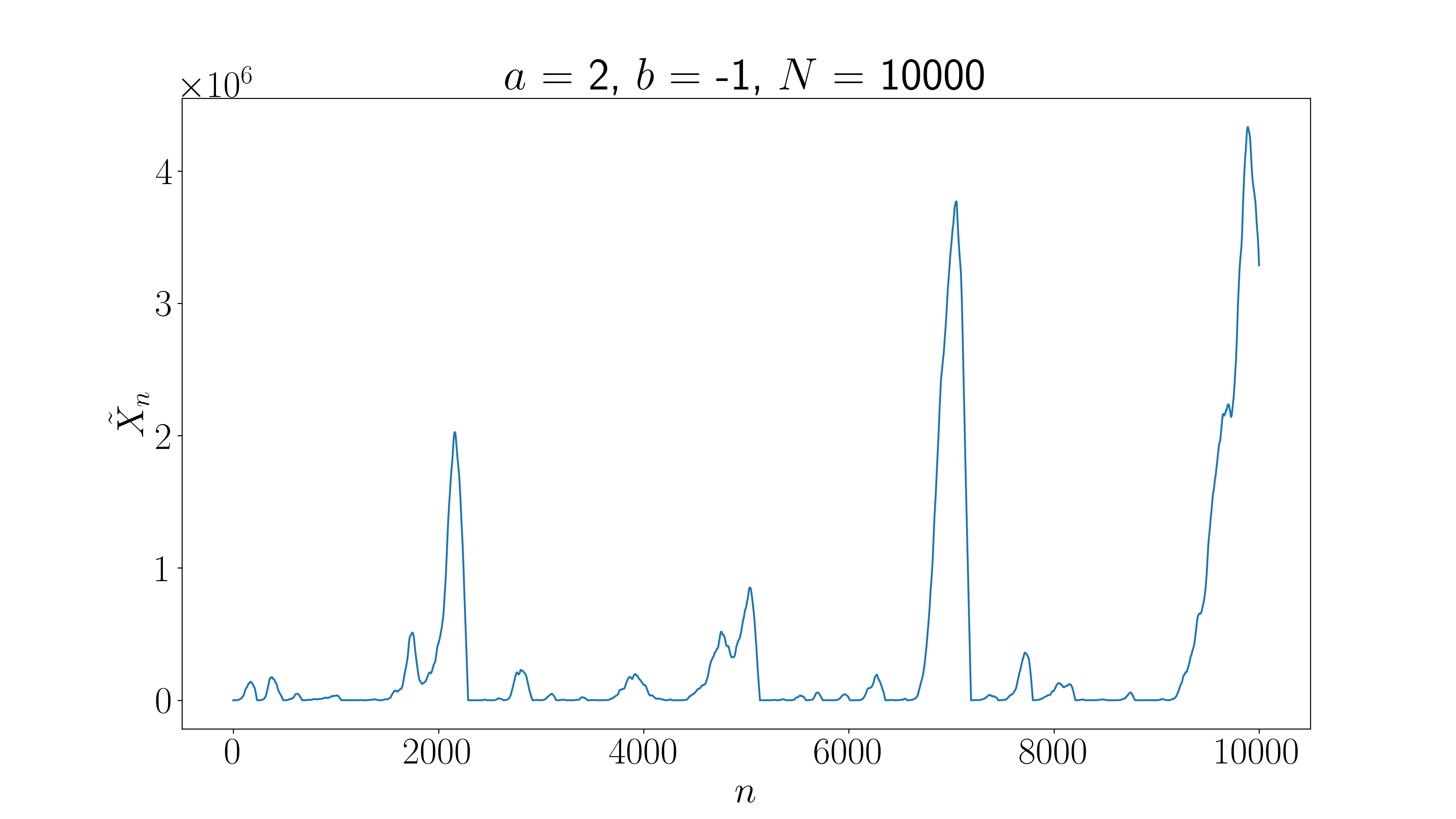}
    \caption{Trajectories of $(\widetilde{X}_n)_{0\le n\le 1000}$ for critical parameters $(a,b)=(4,-4)$ on the left and $(a,b)=(2,-1)$ on the right.}
    \label{fig:critical_R2}
\end{figure}

\subsubsection*{Generalization of the model} As explained at the beginning of the article, the results obtained here should be seen as a starting point for the search for necessary and sufficient conditions for the stability of Hawkes processes with inhibition, in discrete or continuous time. 


We believe that obtaining a similar classification in the cases $p>2$ or $p=\infty$ is a very difficult problem. 
It should be closely related to the study of the asymptotic behavior of certain deterministic equations, such as the non-linear recurrence equation
\[
x_n = (a_1x_{n-1}+\cdots+a_px_{n-p})_+.
\]
It seems that the algebraic structures underlying these equations are intricate and, to this date, unknown. The understanding of these structures seem crucial for the study of the asymptotic behavior of the solutions to these equations.

\appendix

\section{Linear recurrence equations}

Let $\alpha \in \R$, $p\in \N$ and $a_1,\ldots,a_p \in \R$.
Consider the linear recurrence equation
\[
x_n = a_1x_{n-1} + \cdots + a_px_{n-p} + \alpha,\quad n\ge 1,
\]
with given initial data $x_{0},\ldots,x_{-p+1}\in \R$. Define the matrix 
\[
A = \begin{pmatrix}
a_1 &  \cdots & a_{p-1} & a_p\\
1 &  & &\\
 & \ddots & & \\
& & 1 & 
\end{pmatrix},
\]
where vanishing entries are meant to be zero.
Then, setting
\[
\bar x_n = \begin{pmatrix}
x_n\\
\vdots\\
x_{n-p+1}
\end{pmatrix}\quad \text{and} \quad \bar \alpha = \begin{pmatrix}\alpha\\0\\ \vdots \\ 0\end{pmatrix},
\]
the sequence $(\bar x_n)_{n\ge 1}$ solves the system of linear recurrences
\[
\bar x_n = A \bar x_{n-1} + \bar \alpha ,\quad n\ge 1.
\]
Recall that the spectral radius $\rho(A)$ of the matrix $A$ is the defined by
\[
\rho(A) = \max(|\theta_1|,\ldots,|\theta_p|),
\]
where $\theta_1,\ldots,\theta_p\in \C$ are the complex eigenvalues of $A$, counted with algebraic multiplicity. Equivalently, $\theta_1,\ldots,\theta_p$ are the roots, counted with multiplicity, of the characteristic polynomial
\[
P(z) = \det(zI - A) = z^p - a_1z^{p-1} - \cdots -a_p. 
\]

We recall the following classical fact:
\begin{theorem}[\cite{gallier2020linear} Chapter 9, Thm 9.1]
\label{th:lin_rec}
The following are equivalent:
\begin{enumerate}
    \item $\bar x_n$ converges as $n\to\infty$, for every initial data $x_0,\ldots,x_{-p+1}$
    \item $\rho(A) < 1$
\end{enumerate}
\end{theorem}

In the case $p=2$, setting $a = a_1$ and $b=a_2$, we have $\P(z) = z^2 - az-b$. Its roots are
\[
\theta_\pm = \frac a 2 \pm \sqrt{\frac {a^2} 4 + b}.
\]
In particular,
\[
\rho(A) = \begin{cases}
\frac {|a|} 2 + \sqrt{\frac {a^2} 4 + b}, & \text{if }\frac {a^2} 4 + b \ge 0\\
\sqrt{-b}, &  \text{if }\frac {a^2} 4 + b  < 0.
\end{cases}
\]
A quick calculation shows that
\begin{equation}
\label{eq:rho_a_p=2}
\rho(A) < 1 \iff |a| + b < 1\text{ and } b > -1.
\end{equation}

This corresponds to the triangular dashed region of parameters in Figure \ref{fig:RT} of section 2. 
\color{black}

\section{Criteria for strong irreducibility}
\label{sec:proof_irr}

The Markov chain considered in this article is irreducible in the (weak) sense of Douc, Moulines, Priouret and Soulier \cite{dmps_markov_chains}, but not necessarily strongly irreducible, i.e.,~irreducible in the classical sense. In this section, we study the decomposition of the state space into communicating classes. We recall the basic definitions. Let $x,y\in \N^2$. We say that $x$ \emph{leads to} $y$, or, in symbols, $x \to y$, if there exists $n\ge 0$ such that $\P(X_n = y\mid X_0 =x) > 0$. We say that $x$ \emph{communicates with} $y$ if $x\to y$ and $y\to x$. This is an equivalence relation which partitions the state space $\N^2$ into classes called \emph{communicating classes}. 
 
Recall that the Markov chain is called \emph{strongly irreducible} if all states are accessible, equivalently, if $\N^2$ is a communicating class. A communicating class $C\subset \N^2$ is called \emph{closed} if there does not exist $x\in C$ and $y\in C^c$, such that $x\to y$. 



\begin{prop}\ 
\label{prop:irr}
\begin{itemize}
    \item The Markov chain $(X_n)$ is strongly irreducible on $\mathbb{N}^2$ if and only if $a \geq 0$, or  if $a > -\lambda$ and $a+b \geq 0$.
    \item The communicating class of $(0,0)$ contains
\begin{equation}
\label{setS}
\mathcal{S} = \left\{ \couple{0}{0} \right\} \cup \left\{ \couple{0}{k}, k\in \mathbb{N}^* \right\} \cup \left\{ \couple{k}{0}, k\in \mathbb{N}^* \right\}.
\end{equation}
and is actually equal to $\mathcal{S}$ iff $a\le -\lambda$.
\end{itemize}
\end{prop}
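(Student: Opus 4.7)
The central observation driving both bullets is that every state leads to $(0,0)$ in at most two steps: from any $(i,j)$, with probability $e^{-s_{ij}}>0$ the next state is $(0,i)$, and with probability $e^{-s_{0i}}>0$ the one after is $(0,0)$. Consequently the communicating class of $(0,0)$ equals the reachable set $R:=\{y\in\N^2:(0,0)\to y\}$, and the chain is strongly irreducible if and only if $R=\N^2$. The whole proposition thus reduces to computing $R$ as a function of $(a,b)$.

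Two preliminary observations are that $s_{00}=\lambda>0$ gives $(0,0)\to_1(k,0)$ for every $k\ge 0$, and that a Poisson output of $0$ always has positive probability, so $(k,0)\to_1(0,k)$ for every $k\ge 0$. Composing these yields $\mathcal{S}\subseteq R$, proving the ``contains $\mathcal{S}$'' statement of the second bullet. For the characterization of when $R=\mathcal{S}$, I would split on the sign of $a+\lambda$. If $a\le-\lambda$, then $s_{k,0}=0$ for every $k\ge 1$, and one checks that each one-step transition from a state in $\mathcal{S}$ lands again in $\mathcal{S}$, so $\mathcal{S}$ is forward-stable and $R\subseteq\mathcal{S}$. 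Conversely, if $a>-\lambda$, then $s_{1,0}=a+\lambda>0$, and the path $(0,0)\to(1,0)\to(1,1)$ places $(1,1)\in R\setminus\mathcal{S}$, showing $R\supsetneq\mathcal{S}$.

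For strong irreducibility I still need to determine when $R=\N^2$. The case $a\ge 0$ is immediate: $s_{n,0}=an+\lambda>0$, so $(0,0)\to(n,0)\to(k,n)$ gives $R=\N^2$ in two steps. If $-\lambda<a<0$ and $a+b\ge 0$, I induct on $n$: assuming $\{(k,n):k\ge 0\}\subseteq R$, the identity $s_{n+1,n}=(a+b)n+(a+\lambda)\ge a+\lambda>0$ shows $(n+1,n)\in R$ can emit to $(k,n+1)$ for any $k$, completing the step. If $-\lambda<a<0$ and $a+b<0$, I introduce $A:=\{j\ge 1:(k,j)\in R\text{ for every }k\ge 0\}$ and note that $A$ is downward closed: if $j\in A$, a predecessor $(j,m)\in R$ with $s_{jm}>0$ witnesses membership, and because $a<0$ the same $m$ satisfies $s_{j',m}>s_{jm}>0$ for $j'<j$, while $(j',m)\in R$ still holds since $m\in\{0\}\cup A$. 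Hence $A=\{1,\dots,K\}$ for some $K\in\N\cup\{\infty\}$, and $K\in A$ requires some $m\in\{0,\dots,K\}$ with $aK+bm+\lambda>0$; maximizing in $m$ yields $K<\lambda/|a+b|$ when $b\ge 0$ and $K<\lambda/|a|$ when $b<0$, so $K<\infty$ and $R\neq\N^2$. Together with the case $a\le-\lambda$, where $R=\mathcal{S}\neq\N^2$, this establishes strong irreducibility if and only if $a\ge 0$ or ($a>-\lambda$ and $a+b\ge 0$).

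The main subtlety is the case $-\lambda<a<0$, $a+b<0$: the set $A$ must be treated as a fixed point (or built up by the iterates $R_n$ of the reachable sets), its downward-closedness verified, and finiteness then extracted from the inequality above, which reflects that the linear map governing the growth of attainable second coordinates is contracting exactly when $a+b<0$.
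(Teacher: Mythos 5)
Most of your argument is sound and, apart from the packaging via the reachable set $R$ and the set $A$, follows the same route as the paper: the reduction to computing which states are accessible from $(0,0)$, the cases $a\ge 0$, $-\lambda<a<0$ with $a+b\ge 0$, $a\le-\lambda$, and the sub-case $b\le 0$ of the reducible regime are all correct (in the last of these, the bound $j<\lambda/|a|$ holds for \emph{every} $j\in A$ because $m=0$ already maximizes $aj+bm+\lambda$, so finiteness is genuine there). The gap is in the remaining sub-case $-\lambda<a<0$, $a+b<0$, $b>0$, which is exactly the hard case. Your downward-closedness claim for $A$ is fine, but the finiteness step is circular: ``maximizing in $m$'' over $m\in\{0,\dots,K\}$ presupposes that $A$ is bounded by $K$, which is the very thing to be proved. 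The alternative $A=\N^*$ (i.e.\ full strong irreducibility) satisfies every local condition you derive: for $b>0$ and any $j$ one can take $m$ so large that $aj+bm+\lambda>0$, and under the hypothesis $A=\N^*$ the witnessing state $(j,m)$ does lie in $R$. So no contradiction can be extracted from these conditions alone.

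What rules out $A=\N^*$ is a global constraint — every path starts at $(0,0)$, where second coordinates are forced to be small — and this is what the paper's proof supplies via a backward induction along an arbitrary path, using the explicit formula for $P^n$ in Lemma~\ref{P^n}: if $(1,k_\star)$ with $(a+b)k_\star+\lambda\le 0$ were reachable, then all intermediate values $m_i$ along the path would have to satisfy $m_i\ge k_\star\ge -\lambda/a$, contradicting $s_{m_1,0}>0$ at the first step. Your approach can be repaired without abandoning its forward flavour by inducting on the $n$-step reachable sets rather than working with the fixed point $A$: letting $c_n$ be the largest $m\ge 1$ for which some $(j,m)$ with $j\ge 1$ is reachable in at most $n$ steps, one gets $c_1=0$ and $c_n\le(\lambda+b\,c_{n-1})/|a|$, whence $c_n<\lambda/|a+b|$ for all $n$ since $b/|a|<1$. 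That is the contraction you allude to in your closing remark, but it has to be run on the iterates; as written, the argument does not exclude strong irreducibility in this sub-case.
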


We will use the following :
\begin{lemma}
\label{P^n}
Let $i,j,k,\ell \in \mathbb{N}$. The transition matrix $P$ of the Markov chain $(X_k)$ satisfies : 
$$P^2\left(\couple{i}{j}, \couple{k}{\ell}\right) = \dfrac{e^{-(s_{ij}+s_{\ell i})}s_{ij}^\ell s_{\ell i}^k}{\ell! ~ k!},$$
and for all $n \geq 3$, 
\begin{equation}
\label{P^nformula}
P^n\left(\couple{i}{j}, \couple{k}{\ell} \right) =  \mathlarger{\mathlarger{\sum}}_{m_1, \dots, m_{n-2} \in \mathbb{N}} \dfrac{\exp\left\{-\displaystyle \sum_{q=1}^n s_{\sigma_{q+1}^n \sigma_{q+2}^n}\right\} \displaystyle \prod_{q=1}^n s_{\sigma_{q+1}^n \sigma_{q+2}^n}^{\sigma_{q}^n }}{m_1! \dots m_{n-2}! ~k! ~\ell! },
\end{equation}
with $\sigma^n := (\sigma^n_1, \sigma^n_2, \dots, \sigma^n_{n+2}) = (k, \ell, m_{n-2}, \dots, m_1, i, j)$.
\end{lemma}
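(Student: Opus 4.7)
The plan is to prove the formula by direct enumeration of sample paths, exploiting the rigid structure of the chain $X_n = (\tilde X_n, \tilde X_{n-1})$. The key structural remark is that the second coordinate of $X_{t}$ is forced to equal the first coordinate of $X_{t-1}$, i.e., $\tilde X_{t-1}$; this is precisely what the factor $\delta_{i\ell}$ in \eqref{transitionP} encodes. Consequently, a length-$n$ trajectory $X_0=(i,j)\to X_1\to\cdots\to X_n=(k,\ell)$ is entirely determined by the scalar sequence $\tilde X_{-1}=j,\tilde X_0=i,\tilde X_1,\ldots,\tilde X_{n-2},\tilde X_{n-1}=\ell,\tilde X_n=k$, and only the intermediate values $\tilde X_1,\ldots,\tilde X_{n-2}$ are free.

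\textbf{Base case $n=2$.} I would apply Chapman--Kolmogorov:
\[
P^2((i,j),(k,\ell)) = \sum_{(m_1,m_2)\in\N^2} P((i,j),(m_1,m_2)) P((m_1,m_2),(k,\ell)).
\]
Substituting \eqref{transitionP}, the two Kronecker deltas enforce $m_2=i$ and $m_1=\ell$, collapsing the double sum to the single term $\frac{e^{-s_{ij}}s_{ij}^\ell}{\ell!}\cdot \frac{e^{-s_{\ell i}}s_{\ell i}^k}{k!}$, which is the claimed identity.

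\textbf{Case $n\ge 3$.} I would proceed by a direct expansion (equivalently, induction on $n$ via Chapman--Kolmogorov). Writing
\[
P^n((i,j),(k,\ell)) = \sum_{x_1,\ldots,x_{n-1}\in\N^2} \prod_{t=1}^n P(x_{t-1},x_t), \qquad x_0=(i,j),\ x_n=(k,\ell),
\]
the delta factors force $x_t=(\tilde X_t,\tilde X_{t-1})$ with $\tilde X_{-1}=j$, $\tilde X_0=i$, $\tilde X_{n-1}=\ell$, $\tilde X_n=k$, leaving only $m_q := \tilde X_q$ for $q=1,\ldots,n-2$ as free summation variables. The product of one-step probabilities is then
\[
\prod_{t=1}^n \frac{e^{-s_{\tilde X_{t-1},\tilde X_{t-2}}}\, s_{\tilde X_{t-1},\tilde X_{t-2}}^{\tilde X_t}}{\tilde X_t!}.
\]

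\textbf{Rewriting in the $\sigma^n$ notation.} The final step is a purely notational relabeling. With $\sigma^n=(k,\ell,m_{n-2},\ldots,m_1,i,j)$ and the change of index $q=n+1-t$, one checks $\sigma^n_q = \tilde X_{n+1-q}$, so $s_{\tilde X_{t-1},\tilde X_{t-2}} = s_{\sigma^n_{q+1},\sigma^n_{q+2}}$ and $\tilde X_t = \sigma^n_q$. The factorials in the denominator become $k!\,\ell!\,m_{n-2}!\cdots m_1!$. This matches the summand in \eqref{P^nformula} exactly, and summing over $m_1,\ldots,m_{n-2}\in\N$ yields the formula.

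I do not expect any genuine mathematical obstacle here: the delta constraints make the enumeration of paths almost trivial and the argument reduces to careful bookkeeping. The only mildly delicate point is to verify the reversed indexing in $\sigma^n$ (which runs from the endpoint $(k,\ell)$ backwards to the initial state $(i,j)$), and to check that each factor $s_{\sigma^n_{q+1}\sigma^n_{q+2}}^{\sigma^n_q}$ in the product corresponds, via the change of variable $q=n+1-t$, to the probability of the single transition $\tilde X_{t-1}\to \tilde X_t$ conditionally on $\tilde X_{t-2}$.
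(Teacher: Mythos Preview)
Your argument is correct and is exactly the natural way to establish this formula: Chapman--Kolmogorov plus the observation that the Kronecker deltas in \eqref{transitionP} force the second coordinate of each intermediate state to coincide with the first coordinate of the preceding state, so that only the scalar values $\tilde X_1,\ldots,\tilde X_{n-2}$ remain as free summation variables. The bookkeeping with the reversed index $q=n+1-t$ to match the $\sigma^n$ notation is handled correctly.

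In the paper itself, Lemma~\ref{P^n} is stated without proof; it is used only as a computational tool in the proof of Proposition~\ref{prop:irr}, and the authors presumably regarded it as routine. Your write-up is therefore not a comparison against an existing argument but rather a complete justification of a result the paper takes for granted, and it is entirely adequate for that purpose.
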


\begin{proof}[Proof of proposition \ref{prop:irr}]
As mentioned above:
$$\couple{i}{j} \to \couple{0}{i} \to \couple{0}{0},$$
for any $(i,j) \in \N^2$ since it only requires that 2 successive 0 are drawn from the Poisson random variable. Therefore, to prove strong irreducibility, it is thus sufficient to prove that $\couple{0}{0} \leadsto \couple{i}{j}$, for all $\couple{i}{j} \in \N^2$. Let us consider different cases, depending on the values of the parameters $a$ and $b$.

\paragraph{\underline{ $-$ If $a \geq 0$ :} }

Since $\lambda >0, s_{00}>0$ thus $\couple{j}{0}$ is accessible from $\couple{0}{0}$, for all $j\in \N$.

Moreover, when $a \geq 0$, $s_{j0} = (aj + \lambda)_+ > 0$ and then $\couple{j}{0} \to \couple{i}{j}$, yielding the result.\medskip

\paragraph{\underline{$-$ If $-\lambda < a < 0$ and $a+b \geq 0$ :}}

Let $k \in \mathbb{N}$. Since $a+b \geq 0$ and $a+\lambda >0$, we have :
$$s_{k+1,k} = (a(k+1) + bk + \lambda)_+ = ((a+b)k + a + \lambda)_+ > 0,$$

Let $\couple{i}{j} \in \N^2$. Since $s_{k+1,k}>0$ for all $k$, we deduce that any $(\ell, k+1)$ is accessible from $(k+1,k)$. Thus, in order to reach $(i,j)$ from $(0,0)$, we move from small steps to $(j,j-1)$, and then reach $(i,j)$ : 
$$\couple{0}{0} \to \couple{1}{0} \to \couple{2}{1} \to \dots \to \couple{j}{j-1} \to \couple{i}{j}.$$
which concludes the proof of this case.\medskip

\paragraph{ \underline{$-$ If $a \leq -\lambda$:}}

We will prove that the communicating class of $(0,0)$ is given by \eqref{setS}. \\Let $k \in \mathbb{N}^*$, then as previously, we have that $\couple{0}{0} \to \couple{k}{0}$ since $s_{00}>0$, however since $a\leq -\lambda$, $s_{k0} = (ak+\lambda)_+ = 0$, and the next step of the Markov chain will be $\couple{0}{k}$. 
Depending on the value of the parameter $b$, the next step of the Markov chain will either be $\couple{0}{0}$ if $s_{0k}=0$, or $\couple{k'}{0}$ with $k'\geq 0$ if $s_{0k}>0$ and so on. 
This proves that the class $cl(0,0)$ is closed and given by \eqref{setS}.

\paragraph{\underline{ $-$ If $-\lambda < a < 0$ and $a+b < 0$}}
In this case we can only prove that the Markov chain is not strongly irreducible on $\N^2$ but we do not identify the communicating class of $(0,0)$.
\begin{itemize}
\item\textbf{Case 1 :  $b\le 0$.}

Since $a<0$, we can choose $k_\star$ such that
$$ak_\star + \lambda \leq 0.$$
We will show that it is not possible to reach the state $\couple{1}{k_\star}$.
Assuming the opposite leads to the existence of $\ell \in \mathbb{N}$ such that 
$\couple{k_\star}{\ell} \to \couple{1}{k_\star}$
which implies that $s_{k_\star, \ell} > 0$. If $b<0$, we deduce that necessarly
$$ak_\star + b\ell + \lambda > 0 \Longrightarrow \ell < \dfrac{-ak_\star - \lambda}{b} \leq 0,$$
so $\ell < 0$ which is contradictory. 
We then deduce that the Markov chain is reducible. 

If $b=0$, $s_{k_\star, \ell} > 0$ would imply that $ak_\star + \lambda > 0$ which contradicts the definition of $k_\star$.
\item\textbf{Case 2 :  $b>0$.}

Since $a+b < 0$, it is possible to choose $k_\star \in \mathbb{N}$ large enough so that : 
$$(a+b)k_\star + \lambda \leq 0.$$ 
In particular, $0 \geq ak_\star+bk_\star+\lambda \geq ak_\star+\lambda$, so that :
$$\dfrac{-ak_\star - \lambda}{b} \geq k_\star \geq \dfrac{-\lambda}{a}.$$
Notice that $k_\star \geq 2$ since $k_\star \geq \dfrac{-\lambda}{a} > 1$.

We will show that it is not possible to reach $\couple{1}{k_\star}$ starting from $\couple{0}{0}$. 
Assuming the opposite leads us to the existence of $n \in \mathbb{N}$ such that 
$$P^n\left( \couple{0}{0}, \couple{1}{k_\star} \right) > 0.$$ 
Using \eqref{P^nformula} in the Lemma \ref{P^n} implies that it exists $m_1, \dots, m_{n-2} \in \mathbb{N}$ such that :  
$$\left\{ \begin{array}{ll} s_{k_\star,m_{n-2}} > 0 \vspace{0.1cm}\\ s_{m_{n-2} m_{n-3}}^{k_\star} > 0 \vspace{0.1cm}\\ \dots \vspace{0.1cm}\\ s_{m_2 m_1}^{m_3} > 0 \vspace{0.1cm}\\ s_{m_1 0}^{m_2} > 0 \end{array} \right. $$
First, we thus have that : 
$$ak_\star + bm_{n-2} + \lambda > 0 \Longrightarrow m_{n-2} > \dfrac{-ak_\star - \lambda}{b} \geq k_\star,$$ 
then, since $k_\star > 0$, we necessarily have $s_{m_{n-2}m_{n-3}}>0$. It yields :
$$am_{n-2} + bm_{n-3} + \lambda > 0 \Longrightarrow m_{n-3} > \dfrac{-a m_{n-2} - \lambda}{b} \geq \dfrac{-ak_\star - \lambda}{b} \geq k_\star.$$ 
We thus have by immediate induction that :
$$\forall i \in \{ 1, \dots,n-2 \}, ~m_i \geq k_\star \geq \dfrac{-\lambda}{a}.$$
Finally, $s_{m_1,0} > 0$ implies $a m_1 + \lambda > 0$, which is contradictory.

We conclude that there is no finite path between $\couple{0}{0}$ and $\couple{1}{k_\star}$, so the Markov chain $(X_k)_{k\ge0}$ is reducible on $\mathbb{N}^2$.
\end{itemize}
\end{proof}

\section*{Acknowledgements}
We thank two anonymous reviewers for their valuable suggestions, which helped to improve the presentation of the paper.

M.C. has been supported by the Chair ”Modélisation Mathématique et Biodiversité” of Veolia Environnement-École Polytechnique-Muséum national d’Histoire naturelle-Fondation X and by ANR project HAPPY (ANR-23-CE40-0007) and DEEV (ANR-20-CE40-0011-01). P.M. acknowledges partial support from ANR grant ANR-20-CE92-0010-01 and from Institut Universitaire de France.

\bibliographystyle{plain}
\bibliography{bibliography}

\begin{thebibliography}{10}

\bibitem{bremaud_stability_1996}
Pierre Bremaud and Laurent Massoulie.
\newblock Stability of {Nonlinear} {Hawkes} {Processes}.
\newblock {\em The Annals of Probability}, 24(3):1563--1588, 1996.
\newblock Publisher: Institute of Mathematical Statistics.

\bibitem{cattiaux_limit_2021}
Patrick Cattiaux, Laetitia Colombani, and Manon Costa.
\newblock Limit theorems for hawkes processes including inhibition.
\newblock {\em Stochastic Processes and their Applications}, 149:404--426,
  2022.

\bibitem{costa_renewal_2020}
Manon Costa, Carl Graham, Laurence Marsalle, and Viet-Chi Tran.
\newblock Renewal in {Hawkes} processes with self-excitation and inhibition.
\newblock {\em Advances in Applied Probability}, 52(3):879--915, September
  2020.
\newblock Publisher: Applied Probability Trust.

\bibitem{daley_introduction_2006}
D.~J. Daley and D.~Vere-Jones.
\newblock {\em An {Introduction} to the {Theory} of {Point} {Processes}:
  {Volume} {I}: {Elementary} {Theory} and {Methods}}.
\newblock Springer Science \& Business Media, April 2006.
\newblock Google-Books-ID: 6Sv4BwAAQBAJ.

\bibitem{dmps_markov_chains}
Randal Douc, Eric Moulines, Pierre Priouret, and Philippe Soulier.
\newblock {\em Markov chains}.
\newblock Springer Series in Operations Research and Financial Engineering.
  Springer, Cham, 2018.

\bibitem{ferland_integer-valued_2006}
René Ferland, Alain Latour, and Driss Oraichi.
\newblock Integer-{Valued} {GARCH} {Process}.
\newblock {\em Journal of Time Series Analysis}, 27(6):923--942, November 2006.
\newblock Publisher: Wiley-Blackwell.

\bibitem{fokianos_interventions_2010}
Konstantinos Fokianos and Roland Fried.
\newblock Interventions in {INGARCH} processes.
\newblock {\em Journal of Time Series Analysis}, 31(3):210--225, 2010.
\newblock \_eprint:
  https://onlinelibrary.wiley.com/doi/pdf/10.1111/j.1467-9892.2010.00657.x.

\bibitem{foster_stochastic_1953}
F.~G. Foster.
\newblock On the {Stochastic} {Matrices} {Associated} with {Certain} {Queuing}
  {Processes}.
\newblock {\em The Annals of Mathematical Statistics}, 24(3):355--360,
  September 1953.
\newblock Publisher: Institute of Mathematical Statistics.

\bibitem{gallier2020linear}
Jean Gallier and Jocelyn Quaintance.
\newblock {\em Linear Algebra and Optimization with Applications to Machine
  Learning: Volume I: Linear Algebra for Computer Vision, Robotics, and Machine
  Learning}.
\newblock World Scientific, 2020.

\bibitem{hawkes_spectra_1971}
Alan~G. Hawkes.
\newblock Spectra of {Some} {Self}-{Exciting} and {Mutually} {Exciting} {Point}
  {Processes}.
\newblock {\em Biometrika}, 58(1):83--90, 1971.
\newblock Publisher: [Oxford University Press, Biometrika Trust].

\bibitem{hawkes_cluster_1974}
Alan~G. Hawkes and David Oakes.
\newblock A {Cluster} {Process} {Representation} of a {Self}-{Exciting}
  {Process}.
\newblock {\em Journal of Applied Probability}, 11(3):493--503, 1974.
\newblock Publisher: Applied Probability Trust.

\bibitem{kirchner2016hawkes}
Matthias Kirchner.
\newblock Hawkes and inar ($\infty$) processes.
\newblock {\em Stochastic Processes and their Applications}, 126(8):2494--2525,
  2016.

\bibitem{meyn_markov_2009}
S.~P. Meyn and R.~L. Tweedie.
\newblock {\em Markov {Chains} and {Stochastic} {Stability}}.
\newblock Communications and control engineering series. Cambridge University
  Press, Cambridge ; New York, 2nd ed edition, 2009.

\bibitem{raad_stability_2020}
Mads~Bonde Raad and Eva L{\"o}cherbach.
\newblock {Stability for Hawkes processes with inhibition}.
\newblock {\em Electronic Communications in Probability}, 25(none):1 -- 9,
  2020.

\bibitem{seol2015limit}
Youngsoo Seol.
\newblock Limit theorems for discrete hawkes processes.
\newblock {\em Statistics \& Probability Letters}, 99:223--229, 2015.

\end{thebibliography}

\end{document}